\documentclass[11pt]{article}
\usepackage[margin=1.15in]{geometry}
\usepackage{amsmath,amssymb,amsthm}
\usepackage{booktabs}
\usepackage{graphicx}
\usepackage{tikz}
\usetikzlibrary{arrows.meta,positioning,calc}
\usepackage{enumitem}
\usepackage[colorlinks=true,linkcolor=blue!50!black,citecolor=blue!50!black,urlcolor=blue!50!black]{hyperref}

\newtheorem{theorem}{Theorem}[section]
\newtheorem{lemma}[theorem]{Lemma}
\newtheorem{proposition}[theorem]{Proposition}

\newtheorem{conjecture}[theorem]{Conjecture}
\theoremstyle{definition}
\newtheorem{definition}[theorem]{Definition}
\newtheorem{example}[theorem]{Example}
\newtheorem{problem}[theorem]{Open Problem}
\theoremstyle{remark}
\newtheorem{remark}[theorem]{Remark}

\newenvironment{musical}
  {\par\medskip\noindent\begin{quote}\itshape\textbf{\upshape In musical terms.}\ }
  {\end{quote}\medskip}

\newcommand{\Z}{\mathbb{Z}}
\newcommand{\ec}{\mathrm{ec}}
\newcommand{\form}[1]{\ensuremath{\mathrm{#1}}}

\newif\ifanon
\anonfalse            

\title{The Missing Zigzag:\\
Cycles of Semitone Trichords and a Conservation Law\\ in Equal Temperament\thanks{\ifanon\else%
See the acknowledgment for the collaboration under which this work was
carried out, and Appendix~\ref{app:lean} for the machine-checked
formalization.  Code and the Lean development accompany the paper as
ancillary files.\fi}}
\ifanon
  \author{}
\else
  \author{David Victor Feldman\\
    \small University of New Hampshire\\
    \small\texttt{dvfinnh@gmail.com}}
\fi
\date{August 2026}
\newcommand{\dedication}{\emph{In memory of Larry Polansky (1954--2024)}}

\begin{document}
\maketitle
\ifanon\else\begin{center}\dedication\end{center}\fi

\begin{abstract}
\noindent
A cycle of nine pitch classes in twelve-tone equal temperament can be
arranged so that its nine consecutive three-note windows realize,
exactly once each, every trichord type containing a semitone---types
taken up to transposition but not inversion.  Such cycles generalize to
$n$-tone equal temperament, where the windows realize the $n-3$
semitone-containing types.  In every one of the $1764$ such cycles in
twelve-tone equal temperament the chromatic trichord $012$ appears
\emph{scalarly} (stepwise), as a direct chromatic run, and never in a broken
``zigzag'' contour such as $1$--$0$--$2$.  Why is the zigzag missing?

We show that the zigzag presentation behaves as a conserved
charge of an underlying interval-flow network: it occurs if and only if
$3\mid n$ and $n\notin\{6,12\}$.  The forward obstruction is a
$\Z/3$-valued conservation law, proved for all $n$ by a weight-function
argument; the two exceptional temperaments, one of them the common
Western tuning, are excluded by finite certificates; and existence for
every other multiple of three follows from an explicit construction.  We
quantify rarity, and give exact censuses beyond the reach of direct
search.  The classification is machine checked in Lean~4 in both
directions; the finite twelve-tone exclusion within it rests on a
compiled finite computation.  The paper is written for two audiences.
\end{abstract}

\noindent\textbf{Keywords.} trichord cycle, universal cycle, equal
temperament, twelve-tone, pitch-class set, conservation law, Lean~4.

\noindent\textbf{2020 Mathematics Subject Classification.}
Primary 00A65, 05C45; Secondary 05A15, 05C20, 05C30, 05C38, 11A07,
68V20.

\tableofcontents

\part{The musical question}

\section{Introduction}\label{sec:intro}

Consider pitch-class cycles with the following exhaustive property: as one
traverses the cycle, the successive three-note windows spell out, once
each, \emph{every} trichord type that contains a semitone.  The condition
arises naturally in compositional practice, and the cycles are usable
directly as compositional material (Section~\ref{sec:composition}).  In
twelve-tone equal temperament ($12$TET) there are nine such types---in
set-class shorthand, $012, 013, 014, \dots, 01\mathrm{T}$, where types are
counted up to transposition only, so that $013$ and its inversion
$01\mathrm{T}$ count separately%
\footnote{Throughout, $\mathrm{T}=10$ and $\mathrm{E}=11$.  Our convention
of transpositional (not inversional) equivalence is essential: the
phenomenon studied here is invisible at the level of Forte set classes,
which conflate a type with its inversion.}.
A cycle of nine pitch classes has nine windows, so the requirement is
exactly met: each window one type, no type repeated.  Such cycles exist;
here is one, starting on C:
\[
\textrm{C}\quad \textrm{C}\sharp\quad \textrm{D}\quad \textrm{E}\quad
\textrm{F}\quad \textrm{C}\quad \textrm{B}\quad \textrm{A}\flat\quad
\textrm{G}\quad (\textrm{back to C}),
\]
that is, the pitch-class cycle $(0,1,2,4,5,0,11,8,7)$.  Note that pitch
classes may repeat around the cycle (C occurs twice); only the trichord
types must not.

The chromatic
trichord $012$ must occur as some window---three consecutive notes whose
classes form $\{x,x+1,x+2\}$.  A window has an \emph{ordering}, and up to
the natural symmetries (transposition, running the cycle backwards,
inverting it) there are exactly two essentially different ways the
chromatic trichord can be laid down in time: \emph{scalarly}, as a direct
run $x,\,x{+}1,\,x{+}2$ (or its descent), or as a \emph{zigzag}---any of
the broken contours $x,\,x{+}2,\,x{+}1$, or $x{+}1,\,x,\,x{+}2$, and so
on.  In the example above the chromatic window is C--C$\sharp$--D: scalar.
Both contours might be expected to occur, with the zigzag---four orderings
against the scalar's two---if anything favoured.

They do not both occur.  \emph{Every one} of the $1764$ cycles in $12$TET
(with starting pitch fixed; $52$ cycles up to all symmetries) presents the
chromatic scalarly.  The zigzag is missing.  We call this the
\emph{Missing Zigzag Puzzle} (MZP).  The puzzle sharpens when one varies
the temperament: in $n$TET the same construction asks for cycles of $n-3$
pitch classes realizing the $n-3$ semitone-containing trichord types
$\{0,1,k\}$, $2\le k\le n-2$, and the zigzag's presence depends on $n$ in a
way that no analysis conducted at the level of local constraints can
detect (Proposition~\ref{prop:local}).

The main results, in plain language:

\begin{enumerate}[leftmargin=2em]
\item \textbf{Classification} (Theorem~\ref{thm:classification}).  Zigzag
cycles exist in $n$TET if and only if $n$ is a multiple of $3$ and
$n\notin\{6,12\}$.  Thus $9$TET, $15$TET, $18$TET, $21$TET, \dots all
admit zigzags---indeed $9$TET admits \emph{only} zigzags---while
twelve-tone equal temperament is one of exactly two multiples of three
where the zigzag vanishes, and it vanishes there for a subtler reason than
in, say, $10$TET or $14$TET.
\item \textbf{Separation}.  A reduction (Theorem~\ref{thm:reduction})
splits a cycle into the choice of contour for each trichord type and the
temporal order in which those contours are realized.  The obstruction
occurs entirely in the first layer; the second contributes only the
realization and its multiplicity.
\item \textbf{Mechanism}.  The chromatic type's presentation is a conserved
charge of an interval-flow network attached to any cycle: scalar
presentations are \emph{neutral} (they are the system's only
```self-loops'''), zigzag presentations inject a unit of charge which,
provably, can never be neutralized and must instead propagate around the
entire cycle and cancel arithmetically---possible only when $3\mid n$
(Theorem~\ref{thm:mod3}).
\item \textbf{Rarity}.  Even where zigzags exist they are exponentially
rare: the transfer dynamics splits into disjoint neutral and charged
sectors with growth constants $6.3723$ and $3$ per step
(Section~\ref{sec:dynamics}), so that by $36$TET zigzag configurations
are outnumbered roughly $1.8$ million to one.
\item \textbf{Census}.  Exact counts of cycles by chromatic presentation,
extending far beyond direct search (Section~\ref{sec:census}); e.g.\
$B(24)=952{,}560$ and $B(27)=30{,}352{,}896$ zigzag cycles.
\end{enumerate}

\subsection{How to read this paper}\label{sec:guide}

The paper serves two audiences and is structured so that each has a
through-line.

\emph{Readers primarily interested in the music} can read
Part~I in full (no formal prerequisites), then the statements---not the
proofs---of the theorems in Part~II, each of which is followed by a gloss
marked \textbf{In musical terms}, and then Part~IV, the compositional
epilogue.  The censuses in Section~\ref{sec:census} are readable as
tables.

\emph{Mathematical readers} may skim Part~I for the definitions-by-example,
then read Parts~II and~III linearly.  The ingredients are
elementary---directed multigraphs, Eulerian circuits, modular
arithmetic---but the classification appears to require the global
structures introduced below.  Proposition~\ref{prop:local} shows that the
obstruction is invisible to the adjacency data alone, which is why the
argument is conducted at the level of whole closed cycles.  Connections to the theory of
universal cycles \cite{CDG1992,Jackson1993,Hurlbert1994} are discussed in
Section~\ref{sec:setup}.

Several results below rest on finite computations: certificates, gadget
verifications, and exhaustive analyses of a $25$-state automaton.  Each is
finite and reproducible, and plays the role of a case check inside an
otherwise conventional proof.  Every enumerative claim and every explicit example in the paper has been
computed by two independent programs using different representations of
trichord classes.  The entries of Table~\ref{tab:census} for
$8\le n\le 18$ were recomputed by direct enumeration of pitch words; the
entries for $n=21,24,27$, which lie beyond direct search, were recomputed
from the BEST theorem \cite{AEdB1951} by exact matrix-tree determinants,
a method sharing no code with the circuit enumeration that produced them,
and the two agree plan by plan wherever both are available.  Statements are labelled as theorems, as
finite verifications, or as conjectures, and the distinction is maintained
throughout.  Source code accompanies the paper; the principal results of
Part~II are additionally machine checked in Lean~4
(Appendix~\ref{app:lean}).

\section{The cycles, by example}\label{sec:examples}

\subsection{Twelve-tone cycles}\label{sec:12tet}

Fix $12$TET and identify pitch classes with $\Z_{12}=\{0,1,\dots,11\}$,
$0=\textrm{C}$.  The trichord types containing a semitone are
\[
\{0,1,k\}, \qquad k=2,3,\dots,10,
\]
nine types, transpositionally but not inversionally reduced; note
$\{0,1,11\}$ is a transposition of $\{0,1,2\}$, which is why $k$ stops at
$n-2=10$.

\begin{definition}\label{def:cycle-informal}
A \emph{trichord cycle} in $12$TET is a cyclic sequence of nine pitch
classes $c_0,c_1,\dots,c_8$, repetitions of pitch class allowed, such that
the nine windows $\{c_i,c_{i+1},c_{i+2}\}$ (indices mod~$9$) realize
the nine types $\{0,1,k\}$, for $k=2,\dots,10$, each exactly once.
\end{definition}

The example of the introduction, $(0,1,2,4,5,0,11,8,7)$, has windows
\[
\underbrace{012}_{k=2},\;
\underbrace{124}_{k=3},\;
\underbrace{245}_{k=10},\;
\underbrace{450}_{k=8},\;
\underbrace{5\,0\,11}_{k=6},\;
\underbrace{0\,11\,8}_{k=9},\;
\underbrace{11\,8\,7}_{k=4},\;
\underbrace{870}_{k=5},\;
\underbrace{701}_{k=7},
\]
each type once.

\begin{figure}[ht]
\centering
\includegraphics[width=1.00\textwidth]{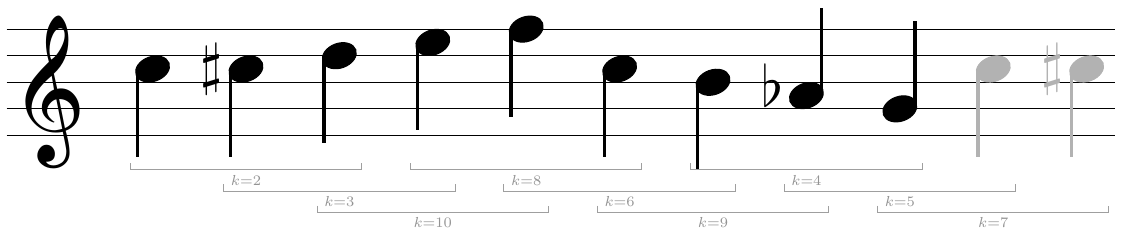}
\caption{The twelve-tone cycle $(0,1,2,4,5,0,11,8,7)$ in notation.  The
final two notes are repeated in grey to display the two windows that wrap
around the cycle; brackets mark the nine windows and name the class
$\{0,1,k\}$ each realizes.  The chromatic window is the opening
C--C$\sharp$--D, a scalar presentation.}
\label{fig:cycle12}
\end{figure}

Two counting conventions will be used throughout.  The \emph{labelled
count} fixes $c_0=0$ and counts sequences; this quotients by transposition
only and is the natural count for the transfer-matrix methods below.  The
\emph{orbit count} further quotients by rotation of the cycle, retrograde
(reversal), and inversion ($x\mapsto -x$).  In $12$TET there are $1764$
labelled cycles and $52$ orbits.

\subsection{Scalar and zigzag}\label{sec:scalar-zigzag}

The chromatic window---the three consecutive notes realizing
$\{x,x{+}1,x{+}2\}$---appears in one of six orderings.  Up to the cycle
symmetries these collapse to two contours:
\[
\underbrace{(x,\,x{+}1,\,x{+}2)\ \text{ or }\ (x{+}2,\,x{+}1,\,x)}_{\text{\emph{scalar}: a chromatic run}}
\qquad\qquad
\underbrace{(x,\,x{+}2,\,x{+}1),\ (x{+}1,\,x,\,x{+}2),\ \dots}_{\text{\emph{zigzag}: four broken contours}}
\]
A cycle is called \emph{scalar} or \emph{zigzag} according to the contour
of its chromatic window.  (Every cycle has exactly one chromatic window,
since the type $\{0,1,2\}$ occurs exactly once.)

\begin{musical}
A scalar chromatic is a direct chromatic run---C, C$\sharp$, D---heard as a
single gesture.  A zigzag chromatic breaks the run: C$\sharp$, C, D, say,
or C, D, C$\sharp$: the same three pitch classes, but with a changed
contour, the semitone concealed inside a leap-and-return figure.  The
puzzle is that in $12$TET the second kind of cycle simply does not exist.
\end{musical}

\subsection{The census, first look}\label{sec:census-first}

Write $A(n)$ and $B(n)$ for the labelled counts of scalar and zigzag cycles
in $n$TET (so cycles have length $n-3$ and realize the types $\{0,1,k\}$,
$2\le k\le n-2$).  Direct enumeration for small $n$ gives
Table~\ref{tab:first-census}.

\begin{table}[ht]
\centering
\begin{tabular}{lrrrrrrrrrrrrrr}
\toprule
$n$ & 6 & 7 & 8 & 9 & 10 & 11 & 12 & 13 & 14 & 15 & 16 & 17 \\
\midrule
scalar $A(n)$ & 0 & 0 & 30 & 0 & 84 & 0 & 1764 & 320 & 24596 & 10368 & 440830 & 159656 \\
zigzag $B(n)$ & 0 & 0 & 0 & 24 & 0 & 0 & 0 & 0 & 0 & 576 & 0 & 0 \\
\midrule
orbits (total) & 0 & 0 & 2 & 1 & 4 & 0 & 52 & 8 & 562 & 228 & 8489 & 2853 \\
\bottomrule
\end{tabular}
\caption{Labelled counts of trichord cycles by chromatic contour, and total
orbit counts, for small $n$.  Three temperaments ($n=6,7,11$) admit no
cycles at all.}
\label{tab:first-census}
\end{table}

Three phenomena leap out.  First, the zigzag column is almost all zeros;
where it is nonzero ($n=9,15$), $n$ is a multiple of $3$.  Second, at
$n=9$ the situation \emph{reverses}: all $24$ cycles are zigzag, and there
are no scalar cycles at all.  Third---beyond the horizon of
Table~\ref{tab:first-census}---the multiples of three are not uniform:
$n=18$ and $n=21$ have zigzags ($B(18)=1800$, $B(21)=83{,}232$) while
$n=12$, alone with $n=6$ among all multiples of three, has none.  The
common temperament, twelve-tone equal temperament, is exceptional.

\begin{example}[The $9$TET miniature]\label{ex:9tet}
$9$TET admits exactly one cycle up to all symmetries:
\[
(0,\,1,\,4,\,3,\,8,\,2),
\]
six pitch classes ($133$-cent steps), whose six windows realize the six
types $\{0,1,k\}$, $2\le k\le 7$, and whose chromatic window is the
\emph{valley} $8,2,\dots$ realizing $\{8,0,1\}$ in zigzag contour.  In the
smallest temperament that admits these cycles at all, the zigzag is not
merely possible but forced.
\end{example}

\begin{example}[A $15$TET zigzag]\label{ex:15tet}
In $15$TET ($80$-cent steps) the cycle
\[
(0,\,2,\,1,\,6,\,0,\,14,\,9,\,13,\,10,\,11,\,4,\,3)
\]
opens with the zigzag chromatic $0,2,1$ and realizes the twelve types
in the window order
\[k=2,\,5,\,6,\,7,\,10,\,11,\,4,\,3,\,9,\,8,\,12,\,13.\]

\begin{figure}[ht]
\centering
\includegraphics[width=0.66\textwidth]{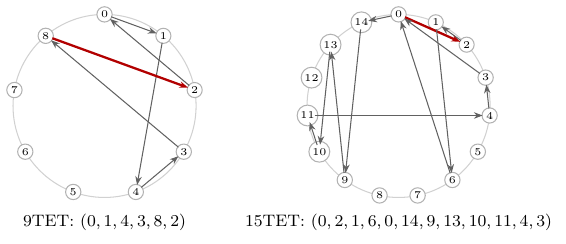}
\caption{The microtonal examples, drawn on the pitch-class circle rather
than a staff: steps of $133$ and $80$ cents have no standard notation.
Left, the unique $9$TET cycle; right, a $15$TET cycle.  The arrow marks
the chromatic window, zigzag in both cases.}
\label{fig:micro}
\end{figure}
\end{example}

\section{Why the puzzle resists local reasoning}\label{sec:local}

Before developing the machinery that solves the MZP, we explain---in
terms a musical reader can follow completely---why it is genuinely hard,
and in particular why no analysis of \emph{local} constraints (which
trichord forms can follow which) can ever solve it.  This section also
proves the one fact about the puzzle that \emph{is} local
(Lemma~\ref{lem:adjacent}), a fact of independent compositional interest.

\subsection{Hinges and joints}\label{sec:hinges}

Consecutive windows of a cycle overlap in two notes, hence share one
melodic interval.  What that shared interval is matters enormously:

\begin{itemize}[leftmargin=2em]
\item If the shared interval is a \emph{semitone} ($\pm1$), it constrains
almost nothing: every one of the $n-3$ types contains a semitone, so a
semitone step is a free \emph{hinge} between windows.
\item If the shared interval has interval class $g\ge 2$, at most four
types contain it---exactly those $\{0,1,k\}$ with
$k\in\{g,\,g{+}1,\,n{-}g,\,n{-}g{+}1\}$ that lie in the range
$2\le k\le n-2$---so a larger shared interval is a rigid \emph{joint}
pinning both neighbours into a set of size at most four: two types
adjacent in the natural enumeration, together with their inversions.
The count is exactly four for interior $g$, and drops at the extremes:
three when $g=2$ (where $n-g+1$ falls outside the range), and two or
three at the largest $g$, where the two pairs coincide.
\end{itemize}

The scarcest joint is the \emph{whole tone} ($g=2$): only the types $k=2$
(the chromatic itself), $k=3$, and $k=n-2$ contain it.  Since a zigzag
chromatic window, by its contour, contains a whole-tone \emph{step}, a
zigzag must spend a whole-tone joint on the chromatic---consuming the
rarest resource in the system.  This observation, in embryo, is the germ
of everything that follows; but as stated it proves nothing, for reasons
we now make precise.

There is one exact and useful bookkeeping identity at this level.  Each
type contains one semitone pair, except the chromatic, which contains two;
in a cycle each semitone pair is realized either as an adjacent melodic
step (each such step serving two windows) or as the outer ```skip''' of a
window.  Writing $S$ for the number of semitone steps and $K$ for the
number of windows whose skip is a semitone,
\begin{equation}\label{eq:2SK}
2S+K \;=\; n-2 .
\end{equation}

\subsection{The chromatic cadence lemma}\label{sec:cadence}

\begin{lemma}\label{lem:adjacent}
In any trichord cycle, two consecutive semitone steps must have the same
direction, and the window they span is the chromatic window presented
scalarly.  Consequently a scalar cycle contains exactly one adjacent pair
of semitone steps, and a zigzag cycle contains none.
\end{lemma}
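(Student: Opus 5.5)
Write two consecutive semitone steps as $c_i\to c_{i+1}\to c_{i+2}$ with $c_{i+1}=c_i+\epsilon_1$ and $c_{i+2}=c_{i+1}+\epsilon_2$, where $\epsilon_1,\epsilon_2\in\{\pm1\}$. I would argue directly on the window $W=\{c_i,c_{i+1},c_{i+2}\}$, which every window of a trichord cycle must realize as one of the types $\{0,1,k\}$.

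Suppose first that the directions differ, $\epsilon_2=-\epsilon_1$. Then $c_{i+2}=c_i$, so $W$ has only two distinct pitch classes. This is impossible: every window of a trichord cycle is a genuine three-element set realizing some $\{0,1,k\}$ with $2\le k\le n-2$. One should note that $n\ge 5$ keeps $0,1,k$ distinct; cycles only exist for $n\ge 8$ anyway. Hence $\epsilon_1=\epsilon_2$, and $W=\{x,x+1,x+2\}$ is presented in the order $x,x{+}1,x{+}2$ or its descent, which is exactly the chromatic type presented scalarly.

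For the consequences, use that the chromatic type occurs in exactly one window, as observed in Section~\ref{sec:scalar-zigzag}. Each adjacent pair of semitone steps $(c_i c_{i+1}, c_{i+1}c_{i+2})$ spans the window starting at $i$. Distinct adjacent pairs span distinct windows, because the pair is determined by its starting index $i$. So at most one adjacent pair exists, and if one exists the unique chromatic window is scalar. Hence a zigzag cycle has none.

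Conversely, in a scalar cycle the chromatic window $x,x{+}1,x{+}2$ (or its descent) consists of two consecutive semitone steps, so there is exactly one such pair. There is one subtlety about cycle length: with $n-3\ge 3$ the window's two steps are distinct steps of the cycle, so nothing degenerates. I expect no real obstacle. The only care needed is this bookkeeping of indices mod $n-3$ and the distinctness of the three classes in each window.
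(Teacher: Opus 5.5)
Your proof is correct and follows the paper's argument: opposite-signed semitone steps make the window's outer notes coincide, same-signed steps give the chromatic presented scalarly, and the uniqueness of the chromatic window yields the counting consequences. You spell out those consequences and the non-degeneracy for $n-3\ge 3$ more explicitly than the paper does, but the route is the same.
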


\begin{proof}
If consecutive steps are $+1,-1$ or $-1,+1$, the window's outer notes
coincide, so the window has only two distinct pitch classes and realizes
no trichord type: impossible, since every window must realize a type.
Same-signed consecutive semitones give the window $\{x,x\pm1,x\pm2\}$,
the chromatic, presented scalarly; and the chromatic occurs once.
\end{proof}

\begin{musical}
Every $12$TET trichord cycle---and, we will see, every cycle in any
temperament where zigzags are impossible---contains exactly one direct
chromatic run, and chromatic motion in consecutive steps happens
\emph{nowhere else} in the cycle.  The chromatic run is thus a structural
landmark, a unique built-in cadence point that a composition can
articulate, conceal, or orbit.
\end{musical}

\subsection{Locality does not decide the question}\label{sec:locality-fails}

The natural local approach to the MZP is the graph of \emph{local
adjacencies}: nodes the $6(n-3)$ ordered forms of the types, edges joining
forms that can legally overlap in a cycle.  If the zigzag forms of the
chromatic were isolated in that graph---unable to attach to any
neighbour---the MZP would be settled locally.  They are not.

\begin{proposition}\label{prop:local}
In $12$TET the zigzag forms of the chromatic trichord admit five directed
local adjacencies: through whole-tone steps to forms of the types $k=3$
and $k=10$, exactly as the arithmetic of Section~\ref{sec:hinges}
permits.  No completed cycle realizes any of the five.
\end{proposition}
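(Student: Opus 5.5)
The plan has two halves: a finite local enumeration, and an appeal to the global census.

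\emph{The five adjacencies.} Work up to transposition and fix the chromatic window as $\{x,x{+}1,x{+}2\}$. List its four zigzag orderings: $(x,x{+}2,x{+}1)$, $(x{+}2,x,x{+}1)$, $(x{+}1,x,x{+}2)$ and $(x{+}1,x{+}2,x)$. Each has exactly one whole-tone step, either as its leading or as its trailing melodic interval; the other step is a semitone.

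A directed local adjacency is a pair of consecutive windows sharing their middle interval. So for each zigzag form I consider a successor $(b,c,d)$ overlapping it in its last two notes, and symmetrically a predecessor overlapping it in its first two notes.

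Adjacencies through a semitone are free hinges and are not what the proposition counts. I restrict to adjacencies whose shared interval is the whole-tone step. By the joint analysis of Section~\ref{sec:hinges}, with $g=2$ and $n=12$, the neighbouring window must then have type $k\in\{2,3,10\}$. The value $k=2$ is excluded because the chromatic type occurs only once, leaving $k=3$ or $k=10$.

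For a shared step $\{x,x{+}2\}$ the free third note is therefore forced into $\{x{-}1,x{+}3\}$, giving the sets $\{x{-}1,x,x{+}2\}$ (type $3$) and $\{x,x{+}2,x{+}3\}$ (type $10$). I then prune candidates with the local legality conditions:
\begin{itemize}
\item every window has three distinct pitch classes;
\item Lemma~\ref{lem:adjacent}: there are no opposite consecutive semitones, and consecutive same-sign semitones occur only inside the (unique) chromatic window, which the zigzag already occupies;
\item the adjacency is read as a directed edge, so a form and its retrograde neighbour are not double counted.
\end{itemize}
I expect exactly five ordered neighbour forms to survive across the four zigzag orderings, all of type $3$ or $10$. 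This is a small hand (or machine) check, and I would tabulate it explicitly. The main risk is matching the paper's precise convention for what counts as a ``directed local adjacency''. If my count comes out as four or eight rather than five, the discrepancy lies in that convention (orientation, or which legality filters are applied), not in the arithmetic. I would resolve it by fixing the definition from the $6(n-3)$-node adjacency graph and recomputing.

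\emph{No cycle realizes any of them.} This half is global and does not follow from the local data; that is the point of the proposition. Any completed cycle realizing one of the five adjacencies contains a zigzag chromatic window, so it is a zigzag cycle. Hence it suffices that $B(12)=0$. This is exactly the exhaustive census of Table~\ref{tab:first-census}: all $1764$ labelled $12$TET cycles were enumerated and every one is scalar. That count is a finite verification, independently recomputed as described in Section~\ref{sec:guide}, and I would cite it directly.

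Alternatively, one could defer to the later classification (Theorem~\ref{thm:classification}, via its twelve-tone certificate), but citing the finite enumeration keeps the proposition self-contained. In either case the content of the proposition is the contrast: the first half shows locally admissible attachments exist, and the second shows they all die globally.
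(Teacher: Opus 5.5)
Your second assertion is handled correctly and matches the paper. Any completed cycle realizing one of these adjacencies has a zigzag chromatic window, so everything reduces to $B(12)=0$. You take that from the direct census, where the paper cites Theorem~\ref{thm:classification} (at $n=12$, the certificate of Proposition~\ref{prop:certificates}); either finite input serves.

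The gap is in the first assertion, which your plan leaves unproved and, as set up, cannot prove.
\begin{itemize}
\item Once $k=2$ is discarded, none of your filters removes anything. Each form is automatically a three-element set. Lemma~\ref{lem:adjacent} concerns a window with two semitone steps, which never occurs in a pair of windows sharing a whole-tone step.
\item Each zigzag ordering has exactly one whole-tone step, hence a joint on exactly one side. Across a shared step $\{x,x+2\}$ the far note is $x-1$ (type $3$) or $x+3$ (type $10$).
\item That gives eight directed adjacencies, to eight distinct neighbour forms: of $T_3$, $\form{I},\form{II},\form{IV},\form{VI}$; of $T_{10}$, $\form{II},\form{III},\form{IV},\form{V}$.
\item These are four up to inversion or up to retrograde, and two up to both. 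No orientation convention turns this into five, so the discrepancy you anticipate is not a matter of convention.
\end{itemize}

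What is missing is the adjacency you threw away. At $g=2$ the hinge arithmetic admits $k=2$ as well, and the chromatic can overlap itself through its whole tone. Fix one whole tone, say the ascending step $+2$ (equivalently, work up to inversion). The directed adjacencies are then the step triples $(a,2,c)$, meaning windows $(a,2)$ followed by $(2,c)$, with $(a,c)$ equal to $(1,-1)$, $(-3,-1)$, $(-1,-3)$, $(-1,1)$ or $(-1,-1)$. The first four go to types $3$ and $10$. The last is $\form{III}\to\form{II}$, pitches $x{+}1,x,x{+}2,x{+}1$.

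This recovers the paper's five. Immediately after its proof, the paper says one of the five is excluded for the visible reason that the chromatic cannot neighbour itself, and the remaining four by no local condition. The statement's phrase ``to forms of the types $k=3$ and $k=10$'' is loose on exactly this point, which is what misled you.

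The repair is to keep the self-overlap in the local count and move the once-only condition into the second assertion. There your reduction to $B(12)=0$ disposes of all five at once, since the self-overlap also contains a zigzag chromatic window.
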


\begin{proof}
The five adjacencies are read off the arithmetic of
Section~\ref{sec:hinges}.  For the second assertion, compare the six
forms of the chromatic class: the two scalar forms are $(1,1)$ and
$(-1,-1)$, and each of the four zigzag forms $(2,-1)$, $(-1,2)$,
$(1,-2)$, $(-2,1)$ carries exactly one entry of interval class $2$.  A
chromatic window therefore has a whole-tone step if and only if it is
zigzag.  An adjacency of the stated kind is a whole-tone step at the
chromatic window, hence a zigzag chromatic, which
Theorem~\ref{thm:classification} excludes at $n=12$.
\end{proof}

\noindent One of the five is excluded for a visible reason---the chromatic
cannot neighbour itself, since it occurs once.  The remaining four are
excluded by no local condition.

Proposition~\ref{prop:local} fixes what an explanation of the MZP must
look like: an invariant of whole closed cycles, not a constraint on
admissible overlaps.  (The adjacency graph harvested \emph{from} the
solved cycles does exhibit an isolated chromatic, but that graph is a
restatement of the phenomenon and cannot explain it.)  Part~II constructs
the required invariant; Section~\ref{sec:dynamics} identifies it as a
conservation law.

\part{The mathematics}

\section{Formal setup}\label{sec:setup}

Fix $n\ge 6$ and work in $\Z_n$.  For $2\le k\le n-2$ let $T_k$ denote the
transposition class of the set $\{0,1,k\}\subset\Z_n$; these are exactly
the $n-3$ trichord classes containing interval class $1$
(with $\{0,1,n-1\}=T_2$, the chromatic class).

\begin{definition}\label{def:cycle}
A \emph{trichord cycle} (henceforth \emph{cycle}) is a cyclic word
$c_0c_1\cdots c_{m-1}$ over $\Z_n$, $m=n-3$, such that the windows
$W_i=\{c_i,c_{i+1},c_{i+2}\}$ (indices mod $m$) are three-element sets
realizing each class $T_k$, $2\le k\le n-2$, exactly once.  A cycle is
\emph{labelled} if $c_0=0$.  The chromatic window is the $W_i$ realizing
$T_2$; the cycle is \emph{scalar} if that window's ordered triple is a
translate of $(0,1,2)$ or $(2,1,0)$, and \emph{zigzag} otherwise.  Let
$A(n)$, $B(n)$ count labelled scalar, zigzag cycles.
\end{definition}

In the language of Chung, Diaconis and Graham \cite{CDG1992} (see also
\cite{Knuth2011} for the de~Bruijn background), a cycle is a
\emph{universal cycle}, of the shortest possible length, for the family
$\{T_k\}$ read through a sliding window of width $3$.  The u-cycle
literature \cite{CDG1992,Jackson1993,Hurlbert1994} establishes that
existence of universal cycles typically hinges on global divisibility
conditions invisible to local transition rules; the present paper may be
read as a case study in which not just existence but a \emph{presentation
refinement}---how one distinguished element of the family is realized---is
governed by such a condition.  We have not located the refined question
in that literature.

The symmetry group acting on cycles is generated by transposition
($c_i\mapsto c_i+t$), rotation ($c_i \mapsto c_{i+r}$), retrograde
($c_i\mapsto c_{-i}$) and inversion ($c_i\mapsto -c_i$); scalar and zigzag
are unions of orbits, being defined by the two orbits of orderings of
$T_2$ under retrograde--inversion.

\section{The Reduction Theorem}\label{sec:reduction}

The engine of the paper converts cycles---objects with both harmonic
content (which classes) and temporal order (which Eulerian
arrangement)---into pairs of simpler objects in which the two aspects
separate.

Each window is a translate of one of the six orderings of $\{0,1,k\}$;
recording only the ordered pair of melodic steps $(a,b)$ (so the window is
$c,\,c{+}a,\,c{+}a{+}b$), the six \emph{forms} of class $T_k$ are:
\begin{equation}\label{eq:forms}
\begin{array}{lll}
\form{I}:  (1,\,k-1) & \form{II}: (k,\,1-k) & \form{III}: (-1,\,k)\\[2pt]
\form{IV}: (k-1,\,-k) & \form{V}:  (-k,\,1) & \form{VI}: (1-k,\,-1)
\end{array}
\qquad(\text{entries in }\Z_n).
\end{equation}
For $k=2$: forms $\form{I}=(1,1)$ and $\form{VI}=(-1,-1)$ are the scalar
presentations; $\form{II},\form{III},\form{IV},\form{V}$ are the zigzags.

View each form as a directed edge $a\to b$ on the node set
$\Z_n\setminus\{0\}$ of step values.  Consecutive windows share their
middle step, so a cycle traverses its chosen forms as a closed chain in
which each edge's head matches the next edge's tail: an Eulerian circuit.

\begin{definition}\label{def:choice}
A \emph{choice function} is a map $F$ assigning to each class $T_k$ one of
its six forms.  Write $G_F$ for the directed multigraph of chosen edges
(distinct classes give distinct ordered pairs, so in fact no two edges
share both endpoints).  $F$ is \emph{balanced} if every node of $G_F$ has
in-degree equal to out-degree; \emph{closed} if
$\sum_k a(F(k))\equiv 0 \pmod n$; \emph{weakly connected} if $G_F$ is
connected when edge directions are ignored; and \emph{valid} if balanced,
closed and weakly connected.  Balance together with weak connectivity is
what yields an Eulerian circuit.
\end{definition}

\begin{theorem}[Reduction]\label{thm:reduction}
Labelled cycles correspond bijectively to pairs consisting of a valid
choice function $F$ together with an Eulerian circuit of $G_F$ with a
distinguished starting edge-position.  Hence
\[
A(n)+B(n)\;=\;\sum_{F\ \mathrm{valid}} m\cdot \ec(G_F),
\]
where $\ec(G_F)$ counts Eulerian circuits of $G_F$ \emph{modulo cyclic
rotation of the edge sequence} and the factor $m$ chooses the
distinguished starting edge; the scalar/zigzag split is read off from
$F(T_2)$.
\end{theorem}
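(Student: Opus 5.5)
The plan is to build the two maps explicitly and check that they are mutually inverse. Given a labelled cycle $c_0\cdots c_{m-1}$, set $d_i=c_{i+1}-c_i\in\Z_n$ (indices mod $m$). Window $W_i$ then has ordered step pair $(d_i,d_{i+1})$. Both steps are nonzero because $W_i$ has three distinct elements. Since $W_i$ realizes exactly one class $T_k$, its ordered triple is a translate of exactly one ordering of $\{0,1,k\}$, so $(d_i,d_{i+1})$ is exactly one of the six forms \eqref{eq:forms} of $T_k$.

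The first step is to show that the six forms of a class are pairwise distinct as ordered pairs, and that forms of distinct classes are distinct. Both follow because the unordered window $\{0,d_i,d_i+d_{i+1}\}$ up to translation recovers the class, and its ordering recovers the form. Hence the assignment $T_k\mapsto(d_i,d_{i+1})$ for the unique $i$ with $W_i\in T_k$ is a well-defined choice function $F$. This is also what justifies the remark that $G_F$ has no parallel edges.

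Next I check that $F$ is valid:
\begin{itemize}
\item The sequence of edges $e_i=(d_i\to d_{i+1})$, $i=0,\dots,m-1$, is a closed trail: the head of $e_i$ is the tail of $e_{i+1}$, and it wraps around mod $m$.
\item It uses every edge of $G_F$ exactly once, since each class occurs exactly once.
\item So it is an Eulerian circuit. This gives balance, and weak connectivity of the edge-bearing part of $G_F$; nodes not touched by any edge should be discarded, i.e.\ $G_F$ is understood as spanned by its edges.
\item Closedness holds because $\sum_k a(F(k))=\sum_i d_i=c_m-c_0=0$.
\end{itemize}
The distinguished starting position is $e_0$, the edge of window $W_0$.

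For the inverse, take a valid $F$ and an Eulerian circuit with starting edge $e_0,\dots,e_{m-1}$. Read off the tails $d_i$, which make a cyclic sequence whose consecutive pairs are the edges. Define $c_0=0$ and $c_{i+1}=c_i+d_i$. Closedness gives $c_m=c_0$, so the word is cyclic. Window $W_i$ has step pair $e_i$, which is a form of some $T_k$, so $W_i$ realizes $T_k$ and its three elements are distinct. The $e_i$ exhaust the $m$ edges, one per class, so each class is realized once. The two constructions are visibly inverse: a cycle with $c_0=0$ is determined by its step sequence, and a circuit with start is determined by its edge sequence.

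Finally, an Eulerian circuit modulo rotation together with a chosen starting edge is the same as a rooted edge sequence. Each rotation class has exactly $m$ distinct rooted representatives because every edge occurs once, so there is no periodicity. This gives the factor $m\cdot\ec(G_F)$. The scalar/zigzag split is read off from $F(T_2)$: forms I and VI are scalar, the rest are zigzag.

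The main obstacle is the bookkeeping around degenerate or weak points rather than depth: correctly handling isolated vertices in the definition of weak connectivity, confirming that no rotation of a circuit can coincide with itself, and verifying distinctness of the six forms when $k$ is small or large (for example $k=n-2$, where $k-1$ and $-k$ interact mod $n$). I would verify the distinctness using the window-set argument rather than by comparing entries.
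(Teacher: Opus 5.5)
Your proof is correct and follows essentially the same route as the paper's. Both arguments read a labelled cycle's step sequence as a closed trail through the chosen edges, which gives balance, connectivity and closure; in the converse direction they integrate a rooted Eulerian circuit from $c_0=0$, and they get the factor $m$ from the pairwise distinctness of the edges. Your remark that $G_F$ must be taken as the graph spanned by its edges is a genuine clarification the paper leaves implicit: $G_F$ has only $n-3$ edges, and a connected graph on all $n-1$ nodes of $\Z_n\setminus\{0\}$ would need at least $n-2$, so weak connectivity could otherwise never hold.
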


\begin{proof}
Given a labelled cycle, each window determines the form of its class and
the chain condition holds by the shared middle step; closure is
$\sum_i (c_{i+1}-c_i)\equiv 0$; balance and connectivity hold because the
edge sequence is a closed trail through all chosen edges.  Conversely, an
Eulerian circuit $(e_1,\dots,e_m)$ of a valid $F$, read as a step sequence
$d_i=a(e_i)$, integrates (from $c_0=0$) to a labelled cycle: each window
realizes the class of its edge, since an ordered step pair determines the
underlying set, and hence the class, uniquely; classes are distinct
because $F$ chooses one edge per class; and $\sum d_i \equiv 0$ is closure.
Distinct arrangements give distinct step sequences (edges are distinct as
an ordered pair $(a,b)$ determines the set $\{0,a,a+b\}$, hence its
transposition class, so no two chosen edges belonging to different
classes can coincide), hence distinct labelled cycles; the $m$ edges of $G_F$ are pairwise distinct, so a
cyclic ordering of them admits exactly $m$ choices of starting edge, and
each choice integrates from the fixed $c_0=0$ to one labelled cycle;
hence the factor $m$.  \emph{(Computationally re-verified: the right-hand
side reproduces every labelled count of Table~\ref{tab:first-census} and
all later censuses.)}
\end{proof}

\begin{musical}
The theorem separates harmony from melody.  A valid choice function is a
\emph{harmonic plan}: a decision, for every trichord type at once, of the
contour in which it will appear---subject to a bookkeeping constraint
(balance) that says every melodic interval must be entered as often as it
is left, plus the requirement that the intervals add up to a whole number
of octaves (closure).  The Eulerian circuit is the \emph{melodic
realization}: an order in which to thread the planned contours.  All of
the obstruction underlying the Missing Zigzag Puzzle lives entirely in
the harmonic plan; the melody enters only through the existence and
multiplicity of Eulerian realizations, which Section~\ref{sec:crystal}
measures.  None of the obstruction lives in the
melody.
\end{musical}

\subsection{Drift: interval cycles that do not close}\label{sec:drift}

A natural question---one a careful reader will ask---is what becomes of
cyclic \emph{interval} patterns satisfying the window conditions whose
steps do not sum to $0$ modulo $n$, and which therefore never close as
pitch-class cycles of length $m$.  In the present framework these are
exactly the objects obtained by relaxing closure: define a
\emph{drift-$s$ cycle} to be a cyclic word of steps $d_0\cdots d_{m-1}$
whose consecutive pairs are forms of the $n-3$ classes, each once, with
$\sum d_i\equiv s \pmod n$.  Theorem~\ref{thm:reduction} holds verbatim
with closure replaced by the drift condition.  Integrating a drift-$s$
word yields a pitch sequence that repeats itself transposed by $s$, and
closes as a genuine pitch-class cycle only after $n/\gcd(s,n)$
periods---of length $m\cdot n/\gcd(s,n)$, realizing each trichord type
once per transposition level $0,s,2s,\dots$.  Drift-$s$ cycles are thus
\emph{sequences} in the time-honored sense: self-transposing patterns.

Two consequences deserve emphasis.  First, the proof of
Theorem~\ref{thm:mod3} invokes balance only, never closure; hence
\emph{zigzag interval cycles of any drift whatsoever exist only when
$3\mid n$}---the conservation law governs the entire drift family, and
the searches confirm it (no balanced zigzag plan exists at
$n=8,10,11,13,14$ for any drift).  Second, at the exceptional
temperaments the zigzag is not absent but \emph{unclosable}.  The full
drift census at $n=12$:

\begin{center}
\begin{tabular}{lccccccc}
\toprule
drift $s$ (and $12{-}s$) & 0 & 1 & 2 & 3 & 4 & 5 & 6\\
\midrule
scalar cycles & 1764 & 1944 & 1863 & 1917 & 1944 & 1971 & 1818\\
zigzag cycles & \textbf{0} & 36 & 36 & 54 & 36 & 54 & 72\\
\bottomrule
\end{tabular}
\end{center}

\noindent Zigzag interval patterns exist at \emph{every} nonzero drift
and vanish exactly at drift zero: of the $48$ balanced zigzag plans, four
are disconnected, and the remaining $44$ distribute as exactly four
realized plans at each of the eleven nonzero drifts, yielding $504$
labelled interval cycles in all.  This is the sharpest statement of the Missing Zigzag Puzzle:
$12$TET's interval grammar contains the zigzag freely; what it forbids
is closure.  For instance, the drift-$1$ word
\[
(2,\,11,\,8,\,1,\,9,\,4,\,7,\,6,\,1)
\]
integrates from $0$ to the pitches
$0,2,1,9,10,7,11,6,0$, then repeats a semitone higher, and higher again,
closing after twelve periods into a $108$-note pitch cycle whose $108$
windows realize each of the nine trichord types at each of the twelve
transposition levels exactly once---with the chromatic broken in every
one of its twelve appearances.

\begin{musical}
The referee's question has a composer's answer.  In twelve-tone equal
temperament you \emph{may} have your zigzag---the broken chromatic
threading a complete tour of the semitone trichords---but only as a
sequence: a pattern that will not return home, rising (or falling) by a
fixed transposition each time around, and closing the circle only after
visiting every key.  The theorems of this paper measure exactly the
price of the broken chromatic: it costs you your tonic.
\end{musical}

\section{The conservation law: zigzag forces $3\mid n$}\label{sec:mod3}

Balance has the following consequence:
if $F$ is balanced then the multisets $\{a(F(k))\}$ and $\{b(F(k))\}$
coincide, so for \emph{any} weight $w:\Z_n\setminus\{0\}\to \Z/3$,
\begin{equation}\label{eq:ledger}
\sum_k \bigl(w(b(F(k)))-w(a(F(k)))\bigr) \;=\; 0 .
\end{equation}
The art is to choose $w$ so that each class's contribution is independent
of the chosen form---so that \eqref{eq:ledger} becomes a fixed identity---%
except at the chromatic, where scalar and zigzag contribute differently.

\begin{theorem}[Conservation law]\label{thm:mod3}
If any cycle presents the chromatic class in zigzag, then $3\mid n$.
\end{theorem}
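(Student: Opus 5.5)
The plan is to follow the ledger strategy announced before the statement. I will construct, for each $n$ with $3\nmid n$, a weight $w:\Z_n\setminus\{0\}\to\Z/3$ such that the contribution $\delta_k(F)=w(b(F(k)))-w(a(F(k)))$ of every non-chromatic class $T_k$, $3\le k\le n-2$, is a constant $c_k$ independent of which of the six forms \eqref{eq:forms} is chosen. For the chromatic class $T_2$, the two scalar forms $\form{I}=(1,1)$ and $\form{VI}=(-1,-1)$ should contribute one value $\sigma$, and the four zigzags $\form{II}$--$\form{V}$ a value $\zeta\ne\sigma$. Theorem~\ref{thm:reduction} says a zigzag cycle yields a balanced $F$ with $F(T_2)$ a zigzag form, and \eqref{eq:ledger} then reads $\zeta+\sum_{k\ge3}c_k=0$. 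It therefore suffices to show $\sigma+\sum_{k\ge3}c_k=0$ as well; this gives $\zeta=\sigma$, a contradiction. Only balance is used, never closure or connectivity, which matches the drift remark in Section~\ref{sec:drift}.

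First I compute the six contributions symbolically. For form $(a,b)$ of $T_k$ the relevant step values are $1,\,k-1,\,k,\,-1,\,1-k,\,-k$, and the six differences are
\[
w(k{-}1)-w(1),\quad w(1{-}k)-w(k),\quad w(k)-w(-1),\quad w(-k)-w(k{-}1),\quad w(1)-w(-k),\quad w(-1)-w(1{-}k).
\]
The natural trial is $w(x)=\bar x \bmod 3$, where $\bar x\in\{1,\dots,n-1\}$ is the least positive representative. A short check gives contributions $k+1$ for forms $\form{I}$ and $\form{VI}$, $k+1+n$ for $\form{II}$ and $\form{IV}$, and $k+1-n$ for $\form{III}$ and $\form{V}$, all mod $3$. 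This trial weight is thus form-independent exactly when $3\mid n$, which is the wrong direction for our purpose. It does, however, show the mechanism: forms split into three pairs whose contributions differ by $0,\pm n$. The chromatic's scalar pair is precisely the ``neutral'' pair $\{\form{I},\form{VI}\}$, while the zigzags carry charge $\pm n\not\equiv 0$ when $3\nmid n$.

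So I would recast the argument as a charge count rather than a single invariant weight. Assign each chosen form a charge $\epsilon\in\{0,+1,-1\}$ according to its pair, neutral for $\form{I}$ and $\form{VI}$. The ledger for this $w$ then becomes
\[
\sum_{k=2}^{n-2}(k+1)\;+\;n\sum_k \epsilon_k\;\equiv\;0\pmod 3 .
\]
Since $3\nmid n$, this pins $\sum_k\epsilon_k$ mod $3$ to a fixed value. That alone is not enough, so I would look for a second weight $w'$, for instance $w'(x)=-\overline{(-x)}$ or $w(x)+w(-x)$ corrected, that detects the charges in a different linear combination. Together the two ledgers should force every non-chromatic class to have total charge $0$ in a suitable sense, and hence $\epsilon_2=0$. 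Alternatively, one can hope for a modified weight, e.g.\ $w(x)=\bar x$ on $1\le\bar x\le \lfloor n/2\rfloor$ and a shifted rule on the upper half, chosen so that the $\pm n$ discrepancies cancel for every $k\ge3$ but not at $k=2$. I would set up the conditions ``$\delta_k$ independent of form'' as a linear system over $\Z/3$ in the $n-1$ unknowns $w(x)$. I would solve it by propagating along $k=3,\dots,n-2$, which links $w(k)$, $w(k-1)$, $w(-k)$ and $w(1-k)$, and then check that the system closes consistently exactly when $3\nmid n$.

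The main obstacle is this last step: exhibiting the right $w$ and verifying the wrap-around consistency of the recurrence near $k\approx n/2$, where $k$ and $-k$ exchange roles. Once $w$ is in hand, the value of $\sum_{k\ge 3}c_k+\sigma$ can be checked by evaluating it on one explicit balanced scalar plan, for example one from $n=8$ or $n=10$ in Table~\ref{tab:first-census}, extended symbolically in $n$. Alternatively, it can be computed in closed form directly, as a sum of $k+1$ over $k$.
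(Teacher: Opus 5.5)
Your framework matches the paper's: a ledger over balanced plans, a weight under which each class $T_k$ with $k\ge 3$ pays a form-independent fee, and scalar chromatic forms paying $0$ because they are self-loops. Your table for the trial weight $w(x)=\bar x$ is also correct. But the proposal stops exactly where the proof begins. You never exhibit a form-independent weight, and you name this yourself as the main obstacle. As it stands this is a gap, not a proof.

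The missing step is a correction already visible in your own table. For $k\ge 3$:
\begin{itemize}
\item forms $\form{II}$ and $\form{IV}$ never touch the semitone nodes, and already agree at $k+1+n$;
\item $\form{I}$ and $\form{V}$ each meet $+1$ exactly once ($\form{I}$ as tail, $\form{V}$ as head);
\item $\form{III}$ and $\form{VI}$ each meet $-1$ exactly once.
\end{itemize}
Lowering $w(1)$ by $n$ and raising $w(-1)$ by $n$ therefore moves those four fees to $k+1+n$ as well. The resulting weight $w(1)=1-n$, $w(-1)=2n-1$, $w(j)=j$ for $2\le j\le n-2$ is the paper's weight ($w(1)=0$, $w(-1)=1$, $w(j)=n+j-1$) shifted by a constant.

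Under this weight the zigzag chromatic forms pay $\pm n$. They cannot pay one common $\zeta$ under any weight, because $\form{II}$ and $\form{III}$ of $T_2$ are reverse edges and pay opposite fees. Moreover
\[
\sum_{k=3}^{n-2}(n+k+1)=n(n-4)+\binom{n}{2}-6\equiv n(n-1)+\binom{n}{2}=3\binom{n}{2}\equiv 0 \pmod 3
\]
for every $n$. So the ledger leaves $\chi\equiv 0$, and a zigzag forces $3\mid n$.

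The routes you propose in place of this would not get there.
\begin{itemize}
\item Your candidate second weights are degenerate. $-\overline{(-x)}=\bar x-n$ differs from $\bar x$ by a constant and gives the identical ledger. $\bar x+\overline{(-x)}=n$ is constant and gives the empty one. What genuinely completes your charge count is adding the balance equations at the nodes $+1$ and $-1$ (the ledgers of their indicator weights), which is exactly the correction above.
\item A shifted rule on the upper half of $[2,n-2]$ would break the arithmetic progression that form-independence forces on that range. Equating the fees of $\form{I}$ and $\form{III}$ already gives $w(k)-w(k-1)=w(-1)-w(1)$ for all $3\le k\le n-2$. Nothing special happens near $n/2$; the defect sits at $\pm1$, the nodes every class shares.
\item Your expectation that the linear system is solvable ``exactly when $3\nmid n$'' is mistaken. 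The weight exists for every $n$, and divisibility by $3$ enters only through the chromatic's charge $\pm n$.
\item Verifying $\sigma+\sum_k c_k=0$ on one explicit plan at $n=8$ or $10$ says nothing about other $n$. The closed-form sum is what the argument needs.
\end{itemize}
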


\begin{proof}
By Theorem~\ref{thm:reduction} it suffices to show no \emph{balanced}
choice function has a zigzag chromatic when $3\nmid n$.  Define
$w:\Z_n\setminus\{0\}\to\Z/3$ by
\[
w(1)=0,\qquad w(-1)=1,\qquad w(j)=n+j-1 \bmod 3 \quad (2\le j\le n-2).
\]
A direct check of the six forms \eqref{eq:forms} of each class $k\ge 3$
gives, for every form,
\[
w(b)-w(a)\;\equiv\; n+k-2 \pmod 3,
\]
independent of the form.  (For instance, form $\form{IV}=(k{-}1,-k)$:
$w(n{-}k)-w(k{-}1) = (2n-k-1)-(n+k-2) = n-2k+1 \equiv n+k+1 \equiv n+k-2$.)
For the chromatic $k=2$, the scalar forms contribute $0$ and the four
zigzag forms contribute $\pm n \bmod 3$.  Summing \eqref{eq:ledger}:
\[
0 \;=\; \sum_{k=3}^{n-2}(n+k-2) \;+\; \chi
\;=\;\Bigl[(n-4)(n-2)+\tfrac{(n-2)(n-1)}{2}-3\Bigr]+\chi \pmod 3,
\]
and the bracket is $\equiv 0$ for every $n$ (check $n\equiv 0,1,2$
separately).  Hence $\chi\equiv 0$: for scalar, $\chi=0$ and there is no
constraint; for zigzag, $\chi=\pm n$, forcing $n\equiv 0 \pmod 3$.
\end{proof}

\begin{musical}
Balance is a ledger: every interval-entry must be matched by an
interval-exit.  The weight $w$ is a bookkeeping currency, a
three-valued colouring of the melodic intervals, chosen so that
every trichord type pays a \emph{fixed} fee into the ledger no matter
which contour it takes---every type except the chromatic, whose scalar
contours pay nothing and whose zigzag contours pay a fee equal to $n$
itself, modulo $3$.  Since the ledger must balance, a zigzag is affordable
only when $n$ is a multiple of $3$.  The chromatic trichord, alone among
all the types, \emph{detects the temperament's divisibility by three}.
Note what the proof never mentions: the order of events.  It is an
accounting identity over the whole harmonic plan, which is why no local,
note-to-note analysis could find it.
\end{musical}

\begin{remark}
The weight is not conjured from nothing: requiring form-independence for
all $k\ge 3$ forces $w$ to be an arithmetic progression on $[2,n-2]$, and
the chromatic is then precisely the class at which the progression's
extrapolated value ($n$) disagrees with the scalar contribution ($0$).
The chromatic is the unique ```defect''' of the progression, and the defect
size is $n \bmod 3$.
\end{remark}

\section{The exceptional temperaments $6$ and $12$}\label{sec:exceptional}

Theorem~\ref{thm:mod3} makes $3\mid n$ necessary.  It is not sufficient:
$B(6)=B(12)=0$.  These are finite facts with finite certificates, but the
\emph{shape} of the certificates locates the phenomenon: the standard
twelve-tone equal temperament is one of exactly two exceptions among all
multiples
of three.

\begin{proposition}[Certificates]\label{prop:certificates}
At $n=6$ there are exactly $4$ balanced choice functions with zigzag
chromatic, and at $n=12$ exactly $48$; in every case the closure sum
$\sum_k a(F(k)) \bmod n$ is nonzero.  Hence $B(6)=B(12)=0$.
\emph{(Computer-verified by two independent programs; at $n=12$ the $48$
closure sums realize every nonzero residue mod $12$---each of
$1,\dots,11$ occurs---and only $0$ is missed.)}
\end{proposition}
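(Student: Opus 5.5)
The proposition is a finite statement, so the plan is an organised exhaustive enumeration together with a short deduction. The deduction comes first. By Theorem~\ref{thm:reduction}, every labelled zigzag cycle yields a valid choice function $F$ with $F(T_2)$ zigzag, and validity includes closure $\sum_k a(F(k))\equiv 0 \pmod n$. So if every balanced zigzag $F$ has nonzero closure sum, then no valid zigzag $F$ exists, and $B(n)=0$. Weak connectivity is never needed for this direction.

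For the enumeration itself, I would not search all $6^{n-3}$ choice functions naively; at $n=12$ that is $6^9\approx 10^7$, which is feasible but unilluminating. Instead I would organise the search by balance at the scarce nodes, following Section~\ref{sec:hinges}.

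\begin{enumerate}
\item Fix $F(T_2)$ to be one of the four zigzag forms $(2,-1)$, $(-1,2)$, $(1,-2)$, $(-2,1)$. Inversion and retrograde act on forms, preserve balance, and send closure sums $s\mapsto \pm s$. They permute the four zigzag forms transitively, so it suffices to treat one, say $(2,-1)$, and multiply the count by $4$, checking that the stabiliser acts trivially on plans.
\item Use balance at node $\pm 2$. Only $T_3$ and $T_{n-2}$ (besides the chromatic) have forms touching interval class $2$, so balance at nodes $2$ and $-2$ sharply restricts $F(T_3)$ and $F(T_{n-2})$.
\item Propagate inward. Node $\pm g$ is touched only by $T_g, T_{g+1}, T_{n-g}, T_{n-g+1}$, so the balance equations form a chain in $g=2,3,\dots,\lfloor n/2\rfloor$. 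This chain can be solved by a transfer-style sweep: at each stage, keep the partial choices whose degree surpluses at already-closed nodes vanish.
\item At the end, impose balance at $\pm1$, which is automatic by the handshake count once all other nodes are balanced. Also track the partial sum $\sum a$ mod $n$ as part of the state.
\end{enumerate}

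This produces the complete list: $4$ plans at $n=6$ and $48$ at $n=12$, together with their closure residues. For $n=6$ the classes are just $T_2,T_3,T_4$, so the check can be written out by hand in the paper. For $n=12$ I would present the sweep as a table, or cite the two independent programs, and record that the residue $0$ never occurs, with residues $1,\dots,11$ all occurring, matching the drift census of Section~\ref{sec:drift}.

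The main obstacle is that nothing structural explains why residue $0$ is missed. Theorem~\ref{thm:mod3}'s weight argument sees only balance, and the closure sum at $n=12$ takes all other values, so no congruence-type invariant can exclude $0$. The certificate is therefore irreducibly computational. The step needing most care is completeness of the sweep, which requires two things: the symmetry reduction in step 1 must be free so that no plan is miscounted, and the node-incidence claim in steps 2–3 must be correct, including the degenerate largest $g$ where the pairs coincide. I would guard against error by having the second program enumerate all $6^{n-3}$ functions directly, with no symmetry reduction, and compare counts residue by residue.
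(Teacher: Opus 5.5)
\paragraph{Where the proposal fails.} Your deduction of $B(6)=B(12)=0$ from Theorem~\ref{thm:reduction} is exactly the paper's: closure is part of validity, and connectivity is not needed. Your symmetry reduction is also sound. Retrograde and inversion generate a group that acts simply transitively on the four zigzag forms of $T_2$, preserves balance, and sends closure sums $s\mapsto\pm s$. The paper certifies the finite part by plain exhaustion of all $6^{n-3}$ plans (two programs, plus a reflected Lean check at $n=12$), so a level sweep would be a legitimate alternative organisation.

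However, step~4 is false, and the certificate depends on it. The handshake identity gives only $\mathrm{out}(1)-\mathrm{in}(1)=\mathrm{in}(-1)-\mathrm{out}(-1)$. Balance at all other nodes leaves this common surplus $d$ free. At $n=12$ the ledger of Theorem~\ref{thm:mod3} forces only $d\equiv\chi\equiv 0\pmod 3$.

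Here is a concrete counterexample. Take the zigzag plan $(\form{II},\form{I},\form{VI},\form{VI},\form{V},\form{II},\form{I},\form{I},\form{I})$ on $T_2,\dots,T_{10}$. Its edges are $2\to11$, $1\to2$, $9\to11$, $8\to11$, $6\to1$, $7\to6$, $1\to7$, $1\to8$, $1\to9$.
\begin{itemize}
\item It is balanced at every node $2,\dots,10$.
\item It has surplus $+3$ at node $1$ and $-3$ at node $11$.
\item Its closure sum is $2+1+9+8+6+7+1+1+1=36\equiv 0\pmod{12}$.
\end{itemize}
A sweep that checks only the closed nodes $\pm2,\dots,\pm5,6$ finds $20$ plans per zigzag form, $80$ in all. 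Four of them have residue $0$. So, as described, your method would not certify $B(12)=0$; it would report a spurious counterexample.

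\paragraph{The repair.} Carry the surplus at node $+1$ in the sweep state, alongside the frontier imbalances and the partial sum; this is the paper's semitone ledger. Require it to vanish at the end, and handshake then gives balance at $-1$ for free. With that ledger the sweep returns $12$ plans per zigzag form, hence $48$, with exactly the residue histogram of Appendix~\ref{app:cert}.

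At $n=6$ the omission happens to be harmless: even without the $\pm1$ condition, each zigzag form admits only one plan. Your brute-force cross-check would have exposed the discrepancy at $n=12$, but the primary method needs the ledger to be correct.
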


The histogram in Proposition~\ref{prop:certificates} is the right way to
see what kind of obstruction this is: \emph{not} a congruence (no residue
class is forbidden except the single value $0$) but a \emph{cancellation}.
Writing $b(n)$ for the number of balanced-and-closed zigzag choice
functions and $a(n)$ for balanced ones, equidistribution would predict
$b\approx a/n$; a character decomposition gives exactly
\[
b(n)\;=\;\frac1n\sum_{\psi}\ S(\psi),\qquad
S(\psi)=\sum_{F\ \mathrm{balanced,\ zigzag}}\psi\Bigl(\sum\nolimits_k a(F(k))\Bigr),
\]
over additive characters $\psi$ of $\Z_n$, and the exceptional
temperaments are precisely those where the nontrivial character sums
cancel the main term.  Extensive computation
(Section~\ref{sec:census}) shows the cancellation is a small-$n$
accident, not a law: the deficit changes sign with the parity of $n$
(for odd multiples of $3$ the value $0$ is the \emph{most} popular
closure sum; for even ones it is suppressed) and only at $n=6,12$ does the
suppression reach zero.  Every multiple of three from $15$ through at
least $36$ has balanced-closed zigzag choice functions in abundance
($16$ at $n=15$, rising to $216{,}436$ at $n=36$).

\begin{musical}
Why $12$?  Not for the reason zigzags fail in $10$TET or $14$TET---there
they are forbidden outright by the conservation law.  In $12$TET the
conservation law is satisfied, the harmonic plans exist ($48$ of them),
and each one misses closing the octave by some nonzero number of
semitones: every possible error except zero occurs.  Twelve-tone equal
temperament misses the zigzag by a kind of destructive interference,
a numerical coincidence shared only with the trivial $6$TET---an
exceptionalism that the histogram gives no advance sign of.  And as
Section~\ref{sec:drift} makes precise, what $12$TET withholds is only
closure: forty-four of those forty-eight harmonic plans are realized by
zigzag \emph{sequences} at their own nonzero drifts (the remaining four
are disconnected and realize nothing).
\end{musical}

\section{Existence: the pumping construction}\label{sec:pumping}

It remains to construct zigzag cycles for every multiple of three except
$6$ and $12$.  Cycles at $n=9,15,18$ are known by search; the content of
this section is a machine for turning a cycle at $n$ into a cycle at
$n+6$, which then settles all remaining $n$ by induction from the bases
$15$ and $18$.

\subsection{Level coordinates}\label{sec:levels}

Pair each class with its inversion partner: for $3 \le j$, the pair
$P_j=\{T_j,\ T_{n+1-j}\}$ at \emph{level} $j$.  Represent nodes by signed
representatives.  The crucial observation---the reason a
temperament-independent gadget can exist at all---is that in these
coordinates the six forms of \emph{both} classes at level $j$ become edges
on the symbolic nodes $\{\pm1,\ \pm(j{-}1),\ \pm j\}$ in a pattern that
depends on neither $j$ nor $n$; and each form's step value is a linear
function $\hat a = \mu\cdot j + c$ of the level, with $\mu\in\{-1,0,1\}$.
(Appendix~\ref{app:tables} tabulates the twelve patterns.)  Consequently:
processing levels in increasing order, the balance conditions close two
nodes per level, and the whole system is a walk in a \emph{fixed} transfer
automaton whose state records the imbalance carried on the current
frontier $\{\pm j\}$, with side-ledgers for the semitone nodes $\pm1$ and
for the \emph{integer} sum $\Sigma=\sum \hat a$, which is linear in levels
and satisfies: $F$ is closed iff $n \mid \Sigma$.

\subsection{The gadget lemma}\label{sec:gadget}

\begin{lemma}[Pumping gadgets]\label{lem:gadget}
There exist data $(X,\varphi,w)$ for each parity class as follows, all
conditions being identities of the $n$-independent transfer tables,
verified by finite computation:
\begin{itemize}[leftmargin=2em]
\item \emph{Odd chain.}  The base cycle $F_{15}$ with forms
$(\form{II},\form{V},\form{II},\form{I},\form{III},\form{IV},\form{VI},
\form{V},\form{II},\form{I},\form{III},\form{IV})$ on classes
$T_2,\dots,T_{13}$ is valid and zigzag, enters its top with frontier state
$X=(0,-1)$, is capped by form $\varphi=\form{VI}$, and has integer sum
$\Sigma=0$.  The three-level word
\[
w_{\mathrm{odd}}=(\form{VI},\form{I})\,(\form{III},\form{II})\,(\form{IV},\form{V})
\]
maps $X$ to $X$ through valid closures, contributes $(0,0)$ to the
semitone ledgers, has level-coefficient sum $\sum\mu=0$, and constant term
$-3\mu_\varphi$; hence re-capping with $\varphi$ three levels higher
preserves $\Sigma=0$ exactly.
\item \emph{Even chain.}  Likewise $F_{18}$ (forms
$\form{II},\form{I},\form{V},\form{II},\form{I},\form{V},\form{II},
\form{III},\form{IV},\form{V},\form{II},\form{I},\form{V},\form{II},\form{I}$
on $T_2,\dots,T_{16}$) with $X=(1,0)$, merged cap
$(\form{III},\form{IV})$, invariant $\Sigma = 1\cdot n$, and word
word
\[w_{\mathrm{even}}=(\form{V},\form{IV})\,(\form{II},\form{III})\,(\form{I},\form{VI})\]
satisfying the corresponding identity, so that after pumping
$\Sigma=1\cdot(n+6)$: closed again.
\end{itemize}
\end{lemma}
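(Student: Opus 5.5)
The plan is a direct finite verification, organised so that the checks are independent of $n$. First, I would make the level coordinates of Section~\ref{sec:levels} explicit. For each level $j$ and each of the six forms \eqref{eq:forms} of $T_j$ and of $T_{n+1-j}$, rewrite both steps $(a,b)$ as signed symbolic nodes in $\{\pm1,\pm(j-1),\pm j\}$. This uses $n+1-j\equiv-(j-1)$ and $n-j\equiv -j$ modulo $n$. I would then record the linear step value $\hat a=\mu j+c$. For example, $T_{n+1-j}$ in form I is $(1,n-j)=(1,-j)$, and form II is $(n+1-j,\,j-n)=(-(j-1),\,j)$. The resulting twelve edge patterns carry no residual dependence on $j$ or $n$, which is the content of Appendix~\ref{app:tables}.

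Second, I would define the transfer automaton. Process the levels in increasing order. The state is the net in-minus-out imbalance on the frontier pair $(+j,-j)$, together with the running semitone ledgers at $\pm1$ and the integer sum $\Sigma$. Passing from level $j$ to level $j+1$ closes the nodes $\pm j$: the imbalance they received from level $j$ (via its $\pm j$ endpoints) must be cancelled by the $\pm j$ endpoints of level $j+1$ (these are the $\pm((j+1)-1)$ nodes there). Two facts then follow from the $n$-independence of the tables. The transition function on frontier states is a fixed finite table. The increment to $\Sigma$ from a level is $(\sum\mu)\,j+\sum c$.

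Third, I would check the base cases. For $F_{15}$ and $F_{18}$, validity and zigzag status are direct computations: balance, closure and weak connectivity of $G_F$ with $13$ or $16$ explicit edges. So is the claim that, read in level coordinates, each enters its top level with the stated $X$ and cap, and has $\Sigma=0$, respectively $\Sigma=n=18$.

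Fourth, I would verify the pumping words. Feed $w_{\mathrm{odd}}$ through the transfer table starting from $X=(0,-1)$ and check three things: it returns to $X$, every intermediate node closes with zero imbalance, and the semitone ledger increments sum to $(0,0)$. For $\Sigma$, inserting three levels $j_0+1,j_0+2,j_0+3$ and moving the cap from level $j_0+1$ to $j_0+4$ changes $\Sigma$ by
\[
\sum_{i=1}^{3}\bigl(\mu_i(j_0+i)+c_i\bigr)+3\mu_\varphi .
\]
Since $\sum\mu_i=0$, this is $\sum_i i\mu_i+\sum c_i+3\mu_\varphi$, a constant, and the stated identity makes it $0$. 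The even case is the same, except that the required change is $+6$, so that $\Sigma=n$ becomes $n+6$. Pumping adds three levels, that is, six classes (two per level), so $n\mapsto n+6$ as required.

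Finally, I would preserve weak connectivity. The new levels attach to the frontier nodes $\pm j$, which already lie in the component, and possibly to $\pm1$. Weak connectivity therefore persists, and validity holds at $n+6$ by induction.

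I expect the main obstacle to be the bookkeeping at the cap, with the even chain's merged two-level cap as the hardest case. One has to confirm that removing the old cap and re-inserting it higher interacts correctly both with the frontier state and with the constant terms in $\Sigma$. I would handle it by treating the cap as part of the word, that is, comparing the full tails $w\varphi$ versus $\varphi$, so that the check becomes a single table identity.
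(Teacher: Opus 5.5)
Your plan for the lemma itself is the paper's own. Every assertion is a finite identity in the $n$-independent level tables of Appendix~\ref{app:tables}, checked by running the base plans and the words through the transfer table. Your observation that $\sum\mu=0$ turns the $\Sigma$-shift into the constant $\sum_i i\mu_i+\sum_i c_i+3\mu_\varphi$ is exactly what ``constant term $-3\mu_\varphi$'' encodes. One convention to watch: $\Sigma(F_{18})=18$ and the net shift $+6$ presuppose that the merged node $n/2$ is written $+n/2$ in the cap's high form \form{IV}. If you take the appendix representative $\hat a=-j$ there literally, you will find $\Sigma(F_{18})=0$, $\mu_\varphi=-1$ and net shift $0$. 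That is equally good for closure, so do not read it as a failed check.

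Your final paragraph is not part of this lemma; the paper proves connectivity separately as Lemma~\ref{lem:connectivity}. As written it is not a valid argument, for three reasons.
\begin{itemize}
\item Pumping \emph{deletes} the old cap: the edge $-\Lambda\to-1$ in the odd chain, and in the even chain the two edges through $n/2$ together with that node. Your argument never accounts for this.
\item ``New levels attach to frontier nodes already in the component'' is not a sound principle. A frontier node can have degree $0$: $+7$ in $F_{15}$ touches no edge, and its entry in $X=(0,-1)$ is $0$.
\item A level can close up on itself. Low and high form \form{IV} at level $j$ give the two-cycle $+(j-1)\to -j\to +(j-1)$. This is a legitimate $(0,0)\to(0,0)$ transition, and it is a separate component whenever $+(j-1)$ carries no earlier edge.
\end{itemize}

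Connectivity is therefore not a function of the frontier state. It has to be checked on the actual edge patterns of the gadget and the cap, as the paper does. In the odd chain, the first gadget level's low \form{VI} re-creates the deleted cap edge with the same endpoints, and the remaining gadget edges form a connected piece attached at $\pm1$. In the even chain, the old path through $n/2$ is rerouted through a gadget hub joined to $\pm1$ and to the old frontier node.
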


\begin{lemma}[Connectivity is inherited]\label{lem:connectivity}
In each chain, the pumped graph is connected whenever its predecessor is.
\end{lemma}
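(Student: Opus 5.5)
The idea is to compare the graph $G_{F'}$ of the pumped choice function at $n+6$ with the graph $G_F$ of its predecessor at $n$, working in the level coordinates of Section~\ref{sec:levels}. The key structural fact is that pumping preserves the form chosen at every level below the cap, and each such form's edge has endpoints among $\pm1,\pm(j-1),\pm j$. Re-reading these endpoints in $\Z_{n+6}$ rather than $\Z_n$ gives a relabelling of nodes that is injective on the symbolic nodes. So the subgraph of $G_{F'}$ spanned by the unchanged levels is an isomorphic copy of $G_F$ with its cap removed.

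The plan has three steps.

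\emph{Step 1 (the cap is not a bridge).} Show that deleting the cap edge(s) from $G_F$ leaves every non-isolated node still in one weak component. I would get this from balance. A balanced graph splits into edge-disjoint directed cycles, so every edge of a balanced connected graph lies on a cycle. Hence removing a single cap edge $\varphi$ (odd chain) cannot disconnect its endpoints. For the merged cap $(\form{III},\form{IV})$ of the even chain, I would instead check directly from the transfer table that the two cap edges do not form a cut. This is a finite check, because the cap's pattern is independent of $n$.

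\emph{Step 2 (the three-level word is internally connected).} Show that the edges contributed by the three-level word $w_{\mathrm{odd}}$, respectively $w_{\mathrm{even}}$, together with the re-placed cap, form a connected subgraph on the symbolic nodes $\{\pm1,\pm(j-1),\dots,\pm(j+3)\}$. Here $j$ is the old top level. I would also show that this subgraph touches $\pm1$ or the old frontier $\pm(j-1),\pm j$. These are again identities of the $n$-independent tables, so a single finite verification covers all $n$, in the same way Lemma~\ref{lem:gadget} was verified.

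\emph{Step 3 (assembly).} Every edge of $G_{F'}$ lies either in the copy of $G_F$ minus its cap, which is connected by Step~1, or in the gadget, which is connected by Step~2. The two pieces share a node, so $G_{F'}$ is weakly connected. Induction along each chain, from the bases $F_{15}$ and $F_{18}$, then proves the lemma.

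The main obstacle I expect is Step~1 for the even chain, together with a subtlety in the node identification. At small $n$, symbolic nodes such as $j$ and $-(j')$ may coincide modulo $n$, so $G_F$ may be connected only through such a coincidence, which disappears at $n+6$. To handle this, I would strengthen the induction hypothesis to connectivity of the \emph{symbolic} graph, in which nodes are identified only as forced for all large $n$. I would verify this strengthened hypothesis directly for the bases $F_{15}$ and $F_{18}$. Symbolic connectivity implies actual connectivity, since identifying nodes can only merge components.
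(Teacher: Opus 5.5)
Your proof is correct, but it takes a different route from the paper's. The paper never shows that the predecessor minus its cap is connected. It shows instead that the gadget re-supplies exactly the connections the cap provided:
\begin{itemize}
\item In the odd chain, the first gadget level contributes the low-$\form{VI}$ edge $-\tfrac{n-1}{2}\to-1$, with the same endpoints as the deleted cap edge.
\item In the even chain, the deleted path $-1\to\tfrac n2\to+(\tfrac n2-1)$ is replaced by $-1\to-\tfrac n2\to+(\tfrac n2-1)$ through the hub $-\tfrac n2$, which is distinct from $+\tfrac n2$ modulo $n+6$.
\end{itemize}
So in the paper the predecessor's connectivity transfers verbatim as a statement about fixed edge patterns, and balance is never used. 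You instead use the predecessor's balance to show that deleting the cap cannot disconnect. You then need only that the gadget plus new cap is connected and meets $\pm1$. That does hold for both words: a direct check of the seven and eight edges respectively confirms it. Your route buys robustness, since any connected, attached gadget would do, not only one that happens to reproduce the cap's endpoints. The price is relying on a global fact about balanced digraphs rather than a purely local pattern check.

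Two points need tightening. First, Step~1 for the even chain cannot be a finite check on the transfer table, because whether two edges form a cut depends on all of $G_F$. Your own balance argument settles it, though. The node $\tfrac n2$ carries only the two cap edges. So a directed cycle through $-1\to\tfrac n2$ must continue along $\tfrac n2\to+(\tfrac n2-1)$ and then return to $-1$ without revisiting $\tfrac n2$.

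Second, the identification subtlety never arises, so the symbolic strengthening is unnecessary. The unchanged levels use only the nodes $\pm1,\dots,\pm\tfrac{n-1}{2}$ in the odd chain, and $\pm1,\dots,\pm(\tfrac n2-1)$ in the even chain. These are pairwise distinct modulo both $n$ and $n+6$. The only coincidence, $+\tfrac n2\equiv-\tfrac n2$, is exactly the node that vanishes with the merged cap.
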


\begin{proof}
Odd chain: the removed cap edge is $(-\Lambda\to-1)$; the first gadget
level contributes the edge $(-\Lambda\to-1)$ \emph{with the same
endpoints} (the low-$\form{VI}$ form at level $\Lambda+1$), so the new
graph contains a replacement for the deleted edge, and the gadget's
remaining edges form a connected subgraph attached at $-1$ and $+1$.
Even chain: the deleted merged cap comprised the only two edges at the
node $n/2$, which therefore vanishes cleanly; the old path
$-1\to n/2 \to +(\Lambda{-}1)$ reroutes through the gadget hub $-\Lambda$,
which the gadget joins directly to $-1$, $+1$, $+(\Lambda{-}1)$ and
$+(\Lambda{+}1)$.  Both arguments concern only the fixed relative edge
patterns, hence hold at every pump.
\end{proof}

\begin{theorem}[Classification]\label{thm:classification}
Zigzag trichord cycles exist in $n$TET if and only if $3\mid n$ and
$n\notin\{6,12\}$.
\end{theorem}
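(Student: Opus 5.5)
The classification assembles the preceding results, so the proof will split into the two directions and, within sufficiency, into the two parity chains of Lemma~\ref{lem:gadget}.

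\emph{Necessity.} Suppose a zigzag cycle exists in $n$TET. Theorem~\ref{thm:mod3} gives $3\mid n$ directly. Proposition~\ref{prop:certificates} gives $B(6)=B(12)=0$, so $n\notin\{6,12\}$. The certificate is needed in the following form. By Theorem~\ref{thm:reduction}, every zigzag cycle yields a valid choice function, hence one that is balanced and closed. The certificate shows that every balanced zigzag choice function at $n=6,12$ has nonzero closure sum. So no such cycle exists at $n=6$ or $n=12$.

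\emph{Sufficiency.} Let $3\mid n$ with $n\notin\{6,12\}$. Then $n=9$, or $n\ge 15$ with $n$ odd, or $n\ge 18$ with $n$ even.
\begin{itemize}[leftmargin=2em]
\item For $n=9$, cite the explicit cycle of Example~\ref{ex:9tet}.
\item Odd multiples of three from $15$ onward are $15+6t$. Even ones from $18$ onward are $18+6t$.
\end{itemize}
I would induct on $t$ within each chain, carrying the following invariant for the choice function $F_n$:
\begin{enumerate}[leftmargin=2em]
\item $F_n$ is balanced.
\item $F_n$ has zigzag chromatic form.
\item $F_n$ is weakly connected.
\item $F_n$ enters its top level with frontier state $X$.
\item The semitone ledgers are zero, and the integer sum is $\Sigma=0$ (odd chain) or $\Sigma=n$ (even chain).
\end{enumerate}

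The base cases are $F_{15}$ and $F_{18}$ from Lemma~\ref{lem:gadget}. For the step, remove the cap, append the three-level word $w_{\mathrm{odd}}$ or $w_{\mathrm{even}}$ at levels $\Lambda+1,\Lambda+2,\Lambda+3$, and re-cap. This produces a choice function on the classes $T_2,\dots,T_{n+3}$ of $(n+6)$TET, the new levels filling exactly the six new classes.

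The step then checks each part of the invariant.
\begin{itemize}[leftmargin=2em]
\item \emph{Balance.} The word maps $X$ to $X$ through valid closures and leaves the semitone ledgers at zero. Hence every node closed along the way is balanced, and so are the nodes $\pm1$.
\item \emph{Closure.} The gadget identity gives $\Sigma=0$ in the odd chain, and $\Sigma=1\cdot(n+6)$ in the even chain. Since closure is equivalent to $(n+6)\mid\Sigma$, both are closed.
\item \emph{Chromatic form.} The chromatic form lives at level $2$, which the pump does not touch, so it stays zigzag.
\item \emph{Connectivity.} Lemma~\ref{lem:connectivity} passes weak connectivity to the pumped graph.
\end{itemize}
So $F_{n+6}$ is valid. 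Theorem~\ref{thm:reduction} then converts it, via any Eulerian circuit, into a zigzag labelled cycle, which gives $B(n+6)>0$.

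The main obstacle is justifying that the level-coordinate transfer tables really are independent of $n$. This is what lets a finite check of the gadget identities apply at every $n$. The subtle point is that signed representatives $\pm j$ must remain distinct nodes of $\Z_n\setminus\{0\}$ and must not collide as $n$ grows. For instance, $+j$ and $-(j')$ could coincide near the top level $n/2$, and this is exactly where the even chain's merged cap lives. I would handle this by checking that the pumped levels satisfy $j\le\Lambda+3<n/2$ in the new temperament, apart from the explicitly treated cap. Once that holds, linearity of $\hat a$ in $j$ gives the $\Sigma$ bookkeeping for all $n$ at once.
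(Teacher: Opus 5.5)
Your proposal is correct and follows essentially the same route as the paper. Necessity comes from Theorem~\ref{thm:mod3} together with the certificates of Proposition~\ref{prop:certificates}. Sufficiency comes from Example~\ref{ex:9tet} plus induction along the two pumping chains from $F_{15}$ and $F_{18}$ via Lemmas~\ref{lem:gadget} and~\ref{lem:connectivity}. Your explicit check that the signed representatives $\pm j$ stay distinct below the cap is a useful detail the paper leaves implicit.

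One trivial slip: the pumped plan in $(n+6)$TET lives on the classes $T_2,\dots,T_{n+4}$ (there are $n+3$ of them), not on $T_2,\dots,T_{n+3}$.
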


\begin{proof}
Necessity of $3\mid n$ is Theorem~\ref{thm:mod3}; exclusion of $6,12$ is
Proposition~\ref{prop:certificates}.  For sufficiency: $n=9$ by
Example~\ref{ex:9tet}; for $n\equiv 3\ (\mathrm{mod}\ 6)$, $n\ge 15$, induct
from $F_{15}$ by Lemma~\ref{lem:gadget} (balance is local and preserved;
the semitone ledgers are unchanged; closure holds since the integer
invariant $\Sigma=c\cdot n$ is maintained, so $\Sigma \equiv 0$ modulo the
new $n+6$; the chromatic form is untouched; connectivity by
Lemma~\ref{lem:connectivity}); for $n\equiv 0\ (\mathrm{mod}\ 6)$, $n\ge 18$,
likewise from $F_{18}$.  \emph{(As an independent check, the pumped
cycles were verified from scratch---balance, closure, connectivity,
zigzag chromatic---at $n=21,27,33,39$ and $n=24,30,36,42$.)}
\end{proof}

\begin{musical}
The gadget is a six-trichord module---three inversion-paired levels of
contour assignments---that can be spliced into a working harmonic plan to
lift it from $n$-tone to $(n{+}6)$-tone equal temperament, and spliced
again, indefinitely.  That such a module can exist at all reflects a
structural fact: expressed in the right coordinates, the
grammar of trichord contours is the \emph{same in every temperament}; only
the boundary conditions change.  The witnesses found by search wear this
on their sleeves---their contour assignments are visibly periodic, like
crystal growth---and the gadget is nothing but one unit cell of the
crystal, isolated and certified.
\end{musical}
\part{Why zigzags are rare: the quantitative story}

\section{The dynamics of charge}\label{sec:dynamics}

The classification says where zigzags live; this section says why, even
there, they are so few.  The answer upgrades Theorem~\ref{thm:mod3} from a
single identity to a \emph{dynamical conservation law}.

Abstract the transfer process of Section~\ref{sec:levels} to its frontier:
the state is the pair of imbalances carried on the frontier nodes, a
$25$-state transfer automaton (imbalances in $[-2,2]^2$), of which
thirteen states are reachable, with transitions given by
the $36$ contour combinations of a level.  The chromatic initializes the
walk: scalar forms, being self-loops at $\pm1$---\emph{the only self-loops
in the entire network, for any $n$}---inject nothing and start the walk at
the neutral state $(0,0)$; zigzag forms are genuine edges and start it at
a charged state $(\pm1,0)$ or $(0,\pm1)$.

\begin{theorem}[Sector decomposition]\label{thm:sectors}
The reachable state space of the frontier automaton decomposes into
disjoint components:
\[
\mathcal{N}=\{(0,0),\,(1,1),\,(-1,-1)\}
\qquad\text{and}\qquad
\mathcal{C}\ (\text{ten states}),
\]
with the scalar initializations confined to $\mathcal{N}$ and the zigzag
initializations to $\mathcal{C}$; \emph{no transition leads from
$\mathcal{C}$ into $\mathcal{N}$} (the only states with a transition into
$(0,0)$ are the neutral three).  The separating invariant is
$\delta_+-\delta_-\bmod 3$: identically $0$ on $\mathcal{N}$, never $0$ on
$\mathcal{C}$.  The Perron roots are
\[
\lambda_{\mathcal N}=\frac{7+\sqrt{33}}{2}=6.372281\ldots,
\qquad
\lambda_{\mathcal C}=3\ \text{(exactly)}.
\]
\emph{(Finite computation: the automaton has $25$ states, thirteen of
them reachable---the three of $\mathcal{N}$ and the ten of
$\mathcal{C}$---and the component analysis is an exhaustive check.)}
\end{theorem}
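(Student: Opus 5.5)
The plan is to make the frontier automaton of Section~\ref{sec:levels} fully explicit and then establish the three claims in order: the reachable set, the separating invariant, and the two Perron roots.

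\textbf{Step 1: the transition rule.} From the twelve $n$-independent edge patterns of Appendix~\ref{app:tables}, I would write down how one level acts. A level $j$ chooses one form for $T_j$ and one for $T_{n+1-j}$, giving $36$ contour combinations. Each combination contributes edges on the symbolic nodes $\{\pm1,\pm(j-1),\pm j\}$. The incoming state $(\delta_+,\delta_-)$ is the imbalance on $\pm(j-1)$. A transition is legal only if the new edges make the imbalance at $\pm(j-1)$ vanish, so that those nodes close. The outgoing state is the resulting imbalance on $\pm j$, and the semitone nodes $\pm1$ are sent to the side-ledger.

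Since each node receives at most a bounded number of edge-ends per level, imbalances stay in $[-2,2]$. This gives the $25$ states and a $0/1$ (or multiplicity) transfer matrix $M$.

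The initial states are read off directly from the forms \eqref{eq:forms} at $k=2$:
\begin{itemize}
\item forms $\form{I}$ and $\form{VI}$ are the self-loops $1\to1$ and $-1\to-1$ and contribute $(0,0)$ on the frontier;
\item each zigzag form has exactly one endpoint $\pm2$, so the walk starts at one of $(\pm1,0)$ or $(0,\pm1)$.
\end{itemize}
I would also check, from the general-$k$ forms, that no other form is a self-loop. This would need $a=b$ with $\{0,a,2a\}=\{0,1,k\}$, which forces $k=2$.

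\textbf{Step 2: the invariant.} Rather than rely only on exhaustive search, I would prove that $\delta_+-\delta_-\bmod 3$ is preserved by every legal transition. The idea is to localize the weight $w$ of Theorem~\ref{thm:mod3} to a level. The form-independent contribution $n+k-2$ per class, summed over the pair $k=j$ and $k=n+1-j$, gives a fixed amount. Combining this with the linearity of $w$ on $[2,n-2]$, the frontier charge $\delta_+-\delta_-$, weighted by the difference of $w$-values at $\pm j$, changes by a fixed amount that cancels.

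If this local derivation becomes messy, I would fall back on checking all $36\times25$ transitions, which is finite. Either way, the invariant is $0$ at $(0,0)$ and $\pm1$ at the zigzag initial states. This immediately gives the disjointness of $\mathcal N$ and $\mathcal C$ and that no transition goes from $\mathcal C$ to $\mathcal N$.

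\textbf{Step 3: reachability and the spectra.} I would compute the forward closure from the initial states by breadth-first search over $M$. The expected outcome is that $\mathcal N$ is exactly $\{(0,0),(\pm1,\pm1)\}$ with equal signs, and $\mathcal C$ has ten states. One should then confirm that each piece is strongly connected, or else take the maximum over its irreducible blocks.

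For $\mathcal N$, the restricted matrix is $3\times3$. Using the symmetry $\delta\mapsto-\delta$ (inversion), it reduces to a $2\times2$ block on the symmetric subspace whose characteristic polynomial should be $\lambda^2-7\lambda+4$, with root $(7+\sqrt{33})/2$. For $\mathcal C$, I would exhibit a positive vector $v$ with $Mv=3v$, presumably constant or nearly so on symmetry orbits. Perron--Frobenius then pins the root at exactly $3$ without computing a degree-$10$ characteristic polynomial.

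\textbf{Main obstacle.} The hardest part is Step~1: correctly transcribing the level transition, in particular the bookkeeping of which nodes close at each level and the boundary levels where $j-1=1$. Everything afterwards is finite linear algebra and symmetry reduction.
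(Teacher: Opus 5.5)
Your plan is correct. Its backbone is exactly the paper's finite computation: read the $36$ level transitions off the table of Appendix~\ref{app:tables}, then run exhaustive reachability and spectral checks. Those transitions do reproduce $M_{\mathcal N}$ as printed, and your $2\times2$ reduction to $x^2-7x+4$ is right.

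What you add beyond the paper is a structural proof of the invariant and certificate-style Perron arguments. The first of these needs a different weight, because $w$ from Theorem~\ref{thm:mod3} does not localize to the frontier. Take $\delta=\mathrm{out}-\mathrm{in}$. The two fees of each level sum to $3n-3\equiv 0$, so the partial ledger over levels $\le j$ reads $\chi\equiv-\bigl[(n+j-1)\delta_++(2n-j-1)\delta_-+\delta(-1)\bigr]\pmod 3$. Since $w(1)=0\neq 1=w(-1)$, this involves the semitone ledger $\delta(-1)$ and the level $j$, not the frontier state alone.

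What works is the sign weight $u(\pm i)=\pm1$ on bulk nodes, with $u(\pm1)=0$.
\begin{itemize}
\item From the table, every low form has $u(a)-u(b)\in\{-1,2\}$ and every high form has $u(a)-u(b)\in\{1,-2\}$, so each level pays $0 \bmod 3$.
\item Because $u$ vanishes on $\pm1$, the running total $\sum(u(a)-u(b))$ over the edges placed so far equals $\delta_+-\delta_-$.
\item The chromatic pays $0$ if scalar and $\pm1$ if zigzag.
\end{itemize}
This proves the invariant with no search. It is form-independent only in the bulk: for odd $n$, forms \form{I} and \form{II} of the top class $T_{(n+1)/2}$ pay $-1$ and $0$. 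That is why the frontier law is local, while the global obstruction needs $w$.

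Exact conservation of the charge (not merely of its vanishing) means $\mathcal C$ is not one communicating class, so your hedge about irreducible blocks is genuinely needed. $\mathcal C$ is the union of the closed block $\{(1,0),(0,-1),(-1,1),(0,2),(-2,0)\}$ of charge $1$ and its negative of charge $2$. On the first block the transitions are:
\begin{itemize}
\item $(1,0)\to(1,0),(0,-1),(-1,1),(0,2)$;
\item $(0,-1)\to(1,0),(0,-1),(-1,1),(-2,0)$;
\item $(-1,1)\to(1,0),(0,-1)$;
\item $(0,2)\to(1,0)$;
\item $(-2,0)\to(0,-1)$.
\end{itemize}
Take $v$ equal to $3$ on the four unit states, $2$ on $(\pm1,\mp1)$, and $1$ on the four states with an entry $\pm2$. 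Then $\sum_{s\to t}v_t=3v_s$ for every $s\in\mathcal C$, and the Collatz--Wielandt bounds give spectral radius exactly $3$ without needing irreducibility.

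For the parenthetical about $(0,0)$ over all $25$ states, the invariant is not quite enough on its own. Add that each form's contribution to the inner nodes has $\ell^1$-norm at most $1$, so any state with $|\delta_+|+|\delta_-|\ge3$ has no outgoing transition. This disposes of the six charge-$0$ states outside $\mathcal N$.

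Finally, your self-loop side check is wrong for odd $n$. The congruence $2a\equiv1$ has the solution $a=(n+1)/2$, and forms \form{II} and \form{IV} of $T_{(n+1)/2}$ are self-loops at $(n\pm1)/2$. This does no harm to the theorem, since that class is the cap rather than a transition of the automaton, but the argument must be restricted to bulk levels.
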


\noindent The neutral transfer matrix, on the ordered basis $(0,0)$,
$(1,1)$, $(-1,-1)$, is
\[
M_{\mathcal N}=\begin{pmatrix} 6 & 1 & 1\\ 1 & 1 & 0\\ 1 & 0 & 1\end{pmatrix},
\qquad
\det(xI-M_{\mathcal N})=x^3-8x^2+11x-4=(x-1)(x^2-7x+4),
\]
so $\lambda_{\mathcal N}=(7+\sqrt{33})/2$ exactly.  Since
$\lambda_{\mathcal N}^2=7\lambda_{\mathcal N}-4$,
$\lambda_{\mathcal N}^{3}=45\lambda_{\mathcal N}-28=(259+45\sqrt{33})/2
=258.7527\ldots$, the growth per six steps of $n$.  The charged component
has ten states---$(-2,0)$, $(-1,0)$, $(-1,1)$, $(0,-2)$, $(0,-1)$,
$(0,1)$, $(0,2)$, $(1,-1)$, $(1,0)$, $(2,0)$---with out-degrees
$1,4,2,1,4,4,1,2,4,1$ in that order, and Perron root exactly $3$.

A zigzag thus \emph{behaves like} a topological defect in the transfer
network.
The analogy is to a conserved charge, not to topology in the usual
sense: no space, homology class or winding number is invoked, only the
$\Z/3$ invariant of Theorem~\ref{thm:mod3}.  The charge created by the
chromatic's contour can never be locally annihilated---there is no path
back to neutrality---and must thread the entire cycle, cancelling only
arithmetically at closure, which is what Theorem~\ref{thm:mod3} governs.
And the defect is costly: the charged sector supports strictly less
freedom per level.

The Perron data match the censuses, though the two sectors stand
differently.  In the neutral sector the six-level growth factor is the
exact algebraic number $\lambda_{\mathcal N}^{3}=(259+45\sqrt{33})/2
=258.7527\ldots$, and the measured growth of balanced scalar
configurations agrees with it to every computed digit.
Balanced zigzag configurations grow by the factor $24$ per six steps,
against $\lambda_{\mathcal C}^{3}=27$; the deficit $24/27=8/9$ we
attribute to the cost of conditioning the semitone-node ledger to bridge
back to zero inside the charged sector:

\begin{conjecture}\label{conj:eightninths}
The number of balanced zigzag choice functions satisfies
$a_{\mathrm{zig}}(n+6)/a_{\mathrm{zig}}(n)\to 24$, the factor $8/9$
relative to $\lambda_{\mathcal C}^{3}$ being the exact asymptotic price of
the semitone-ledger bridge in the charged sector.
\end{conjecture}

\begin{conjecture}\label{conj:recursion}
With $t=n/3$, the number $a(t)$ of balanced zigzag choice functions
satisfies $a(2j)=4\,a(2j-1)$ and $a(2j+1)=\frac{6(2j-1)}{2j}\,a(2j)$,
matching all computed values $t=2,\dots,12$:
$4,12,48,216,864,4320,17280,90720,362880,1959552,7838208$
(note $a(10)=9!$).
\end{conjecture}

\noindent Conjecture~\ref{conj:recursion} implies
Conjecture~\ref{conj:eightninths}: the recursion gives
$a(2j{+}2)/a(2j)=24(2j-1)/(2j)\to 24$, so the per-$\Delta n=6$ growth
tends to $24$ and the ratio to $\lambda_{\mathcal C}^{3}=27$ tends to
$8/9$.

\begin{musical}
Three nested reasons, then, why the zigzag is rare.  First, an arithmetic
admission ticket: two temperaments in three refuse it outright.  Second,
confinement: where admitted, the zigzag is a defect line running through
the whole cycle---once the chromatic breaks its run, the bookkeeping never
returns to neutral, and the number of ways to continue shrinks from about
$6.4$ per stage to exactly $3$.  Third, a toll on top: even within the
charged channel, closing the books costs a further constant factor.
Compounded, the effect is exponential: in $36$TET, harmonic plans with a
zigzag chromatic are outnumbered by scalar plans roughly $1.8$ million to
one: at $n=36$ there are $13{,}824{,}544{,}308{,}684$ balanced scalar
plans against $7{,}838{,}208$ balanced zigzag ones, a ratio of
$1{,}763{,}738$.  The zigzag is not locally difficult---it is globally expensive.
\end{musical}

\begin{figure}[ht]
\centering
\begin{tikzpicture}[>={Stealth},x=1mm,y=1mm,font=\small,
  st/.style={draw,circle,inner sep=0.5pt,minimum size=9mm,font=\footnotesize}]
 \node[st] (n0) at (0,0)    {$0,0$};
 \node[st] (n1) at (20,9)   {$1,1$};
 \node[st] (n2) at (20,-9)  {$-1,-1$};
 \draw[->] (n0) to[loop left,looseness=6] (n0);
 \draw[->] (n0) to[bend left=14]  (n1);
 \draw[->] (n1) to[bend left=14]  (n0);
 \draw[->] (n0) to[bend right=14] (n2);
 \draw[->] (n2) to[bend right=14] (n0);
 \node[align=center] at (10,22)
   {\textbf{neutral} $\mathcal N$\\[-1pt] $\lambda=6.3723$};
 \node[align=center,font=\footnotesize] at (6,-21)
   {scalar chromatic\\[-2pt] starts at $(0,0)$};
 \node[draw,rounded corners,minimum width=42mm,minimum height=22mm,
   align=center] (C) at (70,0)
   {\textbf{charged} $\mathcal C$\\[1pt] ten states,\ \ $\lambda=3$\\[1pt]
    $\delta_+-\delta_-\not\equiv 0 \ (3)$};
 \node[align=center,font=\footnotesize] at (70,-21)
   {zigzag chromatic\\[-2pt] starts here};
 \draw[dashed,->] (n1.east) -- node[pos=0.5,above,font=\footnotesize]{}
   (C.north west);
 \draw[dashed,<-] (n2.east) -- (C.south west);
 \node[font=\footnotesize] at (37,0) {no transition};
 \draw[line width=0.7pt] (44.2,7.0) -- (47.8,10.0);
 \draw[line width=0.7pt] (44.2,10.0) -- (47.8,7.0);
 \draw[line width=0.7pt] (44.2,-10.0) -- (47.8,-7.0);
 \draw[line width=0.7pt] (44.2,-7.0) -- (47.8,-10.0);
\end{tikzpicture}
\caption{Schematic of the sector decomposition
(Theorem~\ref{thm:sectors}): the frontier dynamics never passes between
the neutral and charged components in either direction; in particular
charge, once created by a zigzag chromatic, can never be neutralized.}
\label{fig:sectors}
\end{figure}

\section{Crystals, necklaces, and the image--fiber decomposition}
\label{sec:crystal}

The map ```cycle $\mapsto$ its choice function''' abstracts a cycle to its
set of ordered trichords, forgetting temporal order.  By
Theorem~\ref{thm:reduction} the count of cycles factors through this map:
\[
\#\{\text{cycles}\}\;=\;\sum_{F\in \mathrm{Image}} \underbrace{m\cdot\ec(G_F)}_{\text{fiber of }F},
\]
and the rarity of zigzag cycles decomposes into rarity of the
\emph{image} (few zigzag harmonic plans) versus smallness of the
\emph{fibers} (few melodic realizations per plan).  Exact censuses on both
sides give:
\[
\frac{A(n)}{B(n)}
=\underbrace{\frac{\#\text{valid scalar }F}{\#\text{valid zigzag }F}}_{\text{image ratio}}
\times
\underbrace{\frac{\text{avg.\ scalar fiber}}{\text{avg.\ zigzag fiber}}}_{\text{fiber ratio}}:
\qquad
\begin{array}{c|ccc}
n & 15 & 18 & 21\\\hline
A/B & 18 & 4410 & 798.7\\
\text{image} & 7 & 301.8 & 59.6\\
\text{fiber} & 2.6 & 14.6 & 13.4
\end{array}
\]
The image ratio carries the exponential growth (it is the Perron gap of
Theorem~\ref{thm:sectors}, with a parity oscillation); the fiber ratio is
a bounded-looking degree effect---scalar plans keep busier semitone
nodes, and Eulerian counts (by the BEST theorem
\cite{AEdB1951,deBruijn1946}; for existence see Hierholzer
\cite{Hierholzer1873}) pay in factorials of those degrees.  The
rarity of zigzag cycles is thus \emph{primarily harmonic}, not melodic:
it is the set of admissible plans that is thin.

There is a second, non-temporal cyclic structure here.  In the natural
enumeration $012, 013, \dots, 01(n{-}2)$, consecutive classes are bonded
by a shared interval: $T_k$ and $T_{k+1}$ share interval class $k$.
Closing the loop, the last class
$T_{n-2}$ is bonded back to $T_2$ through the whole tone (since
$n-2\equiv-2$).  \emph{The enumeration of trichord types is itself a
closed necklace}, bonded by shared intervals; a cycle's harmonic plan is a
decoration of this necklace by contours; the balance conditions couple
each necklace site to its neighbours and to its inversion partner, so
that the working lattice is the necklace folded at its midpoint: the
level ladder of Section~\ref{sec:levels}.  The visibly quasi-periodic structure of the
valid decorations (the alternating $(\form{III},\form{IV})$ and
$(\form{V},\form{II})$ motifs of the witnesses; the pumping gadget as a
literal unit cell) justifies the word \emph{crystal}; and in this language
the zigzag chromatic is a \emph{dislocation}: a defect whose strain
cannot heal locally (Theorem~\ref{thm:sectors}) and must propagate around
the necklace with total winding $\equiv 0 \bmod 3$
(Theorem~\ref{thm:mod3}).

\begin{musical}
Every trichord cycle is two cycles at once.  One is temporal---the melody,
the order in which the windows sound.  The other is timeless: the necklace
of the trichord types themselves, strung by their shared intervals, which
closes on itself through the whole tone joining $01(n{-}2)$ back to $012$.
The Missing Zigzag Puzzle was never about the first cycle.  It is a fact
about lawful decorations of the second---about harmony, not melody---and
that is why it could be heard long before it could be explained.
\end{musical}

\section{Census}\label{sec:census}

Table~\ref{tab:census} collects the exact censuses, including values far
beyond direct search, obtained through the reduction
(Theorem~\ref{thm:reduction}) by enumerating valid choice functions with
transfer-pruned search and summing Eulerian counts.

\begin{table}[ht]
\centering
\begin{tabular}{rrrl}
\toprule
$n$ & scalar $A(n)$ & zigzag $B(n)$ & notes\\
\midrule
6,\,7 & 0 & 0 & no cycles at all\\
8  & 30 & 0 & \\
9  & 0 & 24 & all cycles zigzag; unique orbit\\
10 & 84 & 0 & \\
11 & 0 & 0 & no cycles at all\\
12 & 1764 & 0 & exceptional zero (Prop.~\ref{prop:certificates})\\
13 & 320 & 0 & \\
14 & 24596 & 0 & \\
15 & 10368 & 576 & \\
16 & 440830 & 0 & \\
17 & 159656 & 0 & \\
18 & 7{,}937{,}910 & 1800 & \\
19,\,20 & $>0$ & 0 & scalar existence by witness\\
21 & 66{,}473{,}604 & 83{,}232 & \\
24 & --- & 952{,}560 & \\
27 & --- & 30{,}352{,}896 & \\
$\ge 30$ & --- & $>0$ & all $n\equiv 0,3\ (6)$, by Theorem~\ref{thm:classification}\\
\bottomrule
\end{tabular}
\caption{The census.  Labelled counts (starting pitch fixed).  Dashes:
count not computed; positivity in the last row is
Theorem~\ref{thm:classification}, not a search.  $B(n)=0$ for all
$3\nmid n$ by Theorem~\ref{thm:mod3}; the table's zeros at such $n$ were
additionally confirmed by search.  Entries for $8\le n\le 18$ were
recomputed by direct enumeration, those for $n=21,24,27$ from the BEST
theorem.}
\label{tab:census}
\end{table}

Two open regularities.  First, total existence: cycles exist for every
computed $n\ge 8$ except $n=11$, and fail for $n\le 7$; we conjecture
existence for all $n\ge 12$.  The emptiness of $11$TET---and the
vanishing of the \emph{scalar} sector at $n=9$, where connectivity, not
arithmetic, is the killer---suggest that the scalar side has its own
story, not treated here.  Second, the balanced-level counts obey exact-looking
laws (Conjectures~\ref{conj:eightninths}--\ref{conj:recursion}); through
the BEST theorem, a full closed-form census reduces to summing spanning-tree
counts over the polytope of valid plans, a well-posed problem we leave
open.

\part{Compositional epilogue}

\section{Composing with trichord cycles}\label{sec:composition}

We close where the project began.  Cycles of this kind underlie the
author's \emph{```\dots still plenty of good music\dots'''}
\cite{FeldmanScore},\footnote{The title alludes to a remark reported of
Schoenberg---spoken rather than written, and not traceable to his
published writings---that there is still plenty of good music to be
written in C major, made when tonal writing had come to seem exhausted.
By the time of the piece, strict twelve-tone writing had itself become
as unfashionable as C major was then; in both cases the old resource
admits renewal.} a guitar piece recorded on the
\emph{Leonardo Music Journal} CD series volume accompanying
\cite{Polansky1997}.  A few consequences of the mathematics
bear directly on compositional practice.

\emph{The smoothness dial.}  By \eqref{eq:2SK} and
Lemma~\ref{lem:adjacent}, the number $S$ of semitone steps parametrizes a
cycle's conjunctness, and the census stratifies accordingly: in $12$TET
the $1764$ cycles fall into strata with $S=2,\dots,5$ of sizes
$18,\,36,\,558,\,1152$---from a single long chain of leaps to
near-saturated chromatic motion.  A composer may choose the stratum as one
chooses a register.

\emph{The unique cadence.}  In any temperament without zigzags---$12$TET
included---every cycle contains exactly one direct chromatic run and no
other adjacent semitone motion (Lemma~\ref{lem:adjacent}).  The run is a
structural downbeat the music can spotlight or suppress: a provably
unique event, available to be composed with as such.

\emph{The forced zigzag of $9$TET.}  The one temperament whose unique
cycle is zigzag (Example~\ref{ex:9tet}) offers the mirror experience: a
chromatic that \emph{cannot} be stated as a run, only broken.  The
$15$TET example (Example~\ref{ex:15tet}) allows both in one temperament.

\emph{Two ears: the fourths transform.}  Serial practice is often heard
to prize the semitone---though characteristically voiced as a seventh or
ninth, which the present construction accommodates automatically, since
it lives in pitch classes and leaves register free.  A genuinely
different harmonic resource comes from multiplication.  For any unit $u$
of $\Z_n$, the map $x\mapsto ux$ carries the half-step family
bijectively onto the family of cycles exhausting the trichord types that
contain interval class $u$, transporting \emph{every} theorem of this
paper intact.  In $12$TET the units yield exactly two families---built
on the two generating interval classes, semitone and perfect
fourth---so precisely two compositional ears are served: the chromatic,
and the quartal--quintal ear of much twentieth-century harmony.  The
transported facts are worth hearing in their new voicing: quartal cycles
of nine pitch classes exhaust all nine fourth-bearing trichord types;
the image of the chromatic is the quartal trichord $027$; and the
classification, conjugated, says that in $12$TET the $027$ always
appears as a \emph{direct stack of fourths} (or of fifths, descending)
and can never be broken---with exactly one such pure stack per cycle,
the quartal cadence.  The example cycle of Section~\ref{sec:intro}
transforms to
\[
\textrm{C}\quad \textrm{F}\quad \textrm{B}\flat\quad \textrm{A}\flat\quad
\textrm{C}\sharp\quad \textrm{C}\quad \textrm{G}\quad \textrm{E}\quad
\textrm{B},
\]
its opening stack $\textrm{C}$--$\textrm{F}$--$\textrm{B}\flat$ the
image of the chromatic run $\textrm{C}$--$\textrm{C}\sharp$--$\textrm{D}$
(Figure~\ref{fig:fourths}).

\begin{figure}[ht]
\centering
\includegraphics[width=1.00\textwidth]{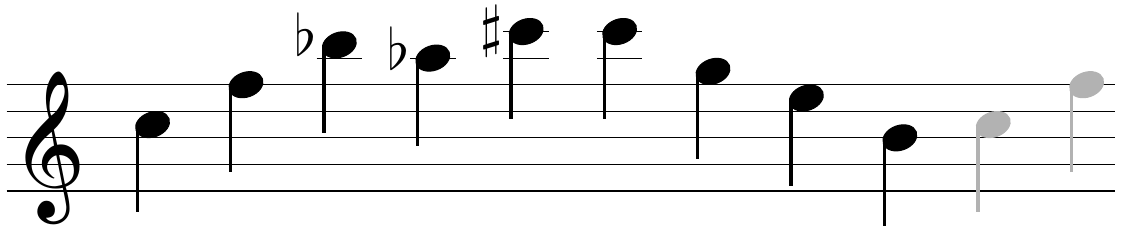}
\caption{The image of the cycle of Figure~\ref{fig:cycle12} under
$x\mapsto 5x$: a quartal cycle whose nine windows realize the nine
fourth-bearing trichord types.  The opening
C--F--B$\flat$ is the image of the chromatic run, and appears
necessarily as an unbroken stack of fourths.}
\label{fig:fourths}
\end{figure}
Note the division of labour: in Marsden's family
(Appendix~\ref{app:serial}) multiplication is a \emph{symmetry},
collapsing the census; here it is an \emph{intertwiner} between sibling
families, transporting the census.  The interval classes that are not
units ($2,3,4,6$ in $12$TET) generate families conjugate to nothing:
cycles exhausting the whole-tone-bearing trichords, say, fall outside
this paper's theory entirely.

\emph{Harmony versus melody, precisely.}  The reduction theorem licenses
a two-stage compositional workflow: first choose the harmonic plan (a
valid choice function---the contour of every trichord type at once), then
choose among its Eulerian realizations; the fiber counts of
Section~\ref{sec:crystal} measure exactly how much melodic freedom each
plan affords.  Plans rich in semitone hinges are melodically pliant;
zigzag plans, confined to the charged sector, are stiff---which a composer
may of course want.

\section{Open problems}\label{sec:open}

\begin{problem} Prove Conjectures~\ref{conj:eightninths} and
\ref{conj:recursion}; more ambitiously, give a closed-form census
$A(n),B(n)$ via character sums over the transfer product and the BEST
theorem.\end{problem}
\begin{problem} Classify the temperaments with no cycles at all
(known: $6,7,11$; conjecturally no others) and those with no
\emph{scalar} cycles (known: $9$; is it alone?).\end{problem}
\begin{problem}\label{prob:marsden} Find a conservation law governing the
presentation of the chromatic trichord in Marsden's all-trichord rings
(Theorem~\ref{thm:missing-scalar}), in the manner of
Theorem~\ref{thm:mod3}.\end{problem}
\begin{problem} Develop
the theory of presentation---how a distinguished member of a family is
realized in a universal cycle---for other u-cycle families (e.g.,
$k$-subsets \cite{Jackson1993,Hurlbert1994}); when is presentation
governed by a conservation law?\end{problem}
\begin{problem} Develop the theory for families built on
non-generating interval classes (e.g., cycles exhausting the
whole-tone-bearing trichord types of $12$TET), which are conjugate to no
half-step family and inherit none of the present results.\end{problem}
\begin{problem} Extend the classification to windows of width $w>3$
(tetrachord cycles), where the ```chromatic''' $0123$ admits many
contours.\end{problem}

\section*{Acknowledgment}
The computations, the reduction to choice functions, the weight-function
argument, the finite certificates, the pumping construction, the sector
analysis, and the Lean~4 development were carried out with the assistance
of Claude (Anthropic).  Responsibility for the mathematics and for any
errors is the author's alone.  Code sufficient to reproduce every
computation accompanies the paper, together with the Lean~4 development
of Appendix~\ref{app:lean}.

\section*{Declaration of generative AI use}
Generative artificial intelligence was used extensively in this work.
The computations, the reduction to choice functions, the
weight-function argument of Theorem~\ref{thm:mod3}, the finite
certificates for $n=6$ and $n=12$, the explicit construction of
Section~\ref{sec:pumping}, the sector analysis of
Section~\ref{sec:dynamics}, and the Lean~4 development of
Appendix~\ref{app:lean} were carried out in collaboration with Claude
(Anthropic), which also assisted in drafting the manuscript.  All
mathematical content has been reviewed by the author; in addition, the
principal results are machine verified (Appendix~\ref{app:lean}), and
every enumerative claim in the paper was recomputed by two independent
programs using different representations.  Responsibility for the
content rests with the author.

\section*{Disclosure statement}
The author reports there are no competing interests to declare.

\section*{Funding}
No funding was received for this work.

\section*{Data availability statement}
\ifanon
The Lean~4 development and the programs reproducing every enumerative
claim are deposited in a public repository with a permanent identifier,
withheld here for anonymous review and supplied to the editor.
\else
The Lean~4 development and the programs reproducing every enumerative
claim are openly available at
\texttt{https://doi.org/10.5281/zenodo.22040738} and developed at
\texttt{https://github.com/DavidVFeldman/missing-zigzag-new}.
\fi

\appendix

\section{Form and transfer tables}\label{app:tables}
The six forms of $T_k$ as step pairs appear in \eqref{eq:forms}.  In level
coordinates (level $j$; nodes $\pm1$, inner $\pm(j{-}1)$, outer $\pm j$;
$\hat a=\mu j+c$):

\begin{center}
\begin{tabular}{lcc@{\qquad}lcc}
\toprule
\multicolumn{3}{c}{low class $T_j$} & \multicolumn{3}{c}{high class $T_{n+1-j}$}\\
form & edge & $(\mu,c)$ & form & edge & $(\mu,c)$\\
\midrule
\form{I}  & $+1\to +(j{-}1)$ & $(0,1)$  & \form{I}  & $+1\to -j$        & $(0,1)$\\
\form{II} & $+j\to -(j{-}1)$ & $(1,0)$  & \form{II} & $-(j{-}1)\to +j$  & $(-1,1)$\\
\form{III}& $-1\to +j$       & $(0,-1)$ & \form{III}& $-1\to -(j{-}1)$  & $(0,-1)$\\
\form{IV} & $+(j{-}1)\to -j$ & $(1,-1)$ & \form{IV} & $-j\to +(j{-}1)$  & $(-1,0)$\\
\form{V}  & $-j\to +1$       & $(-1,0)$ & \form{V}  & $+(j{-}1)\to +1$  & $(1,-1)$\\
\form{VI} & $-(j{-}1)\to -1$ & $(-1,1)$ & \form{VI} & $+j\to -1$        & $(1,0)$\\
\bottomrule
\end{tabular}
\end{center}
These patterns are independent of $j$ and $n$; all transfer computations,
the gadget verification, and Theorem~\ref{thm:sectors} are finite
consequences.

\section{Gadget and base data}\label{app:gadget}
The base plans $F_{15}$, $F_{18}$ and gadget words
$w_{\mathrm{odd}}, w_{\mathrm{even}}$ appear in Lemma~\ref{lem:gadget}.
Pumped plans were re-verified from scratch at
$n=21,27,33,39$ (odd chain) and $n=24,30,36,42$ (even chain).  For
example, the odd chain's first pump yields a valid zigzag plan at $21$TET
whose Eulerian realizations number $83{,}232/ (18\cdot\text{plans})$ on
average per plan; complete listings are in the ancillary files.

\section{Certificate summary for $n=6,12$}\label{app:cert}
$n=6$: the $4$ balanced zigzag plans have closure sums $\{2,4\}$ (each
twice); none is $0$.  $n=12$: the $48$ balanced zigzag plans have closure
sums distributed
\[1{:}4,\quad 2{:}6,\quad 3{:}4,\quad 4{:}4,\quad 5{:}4,\quad 6{:}4,\quad
7{:}4,\quad 8{:}4,\quad 9{:}4,\quad 10{:}6,\quad 11{:}4,\]
every nonzero residue, never $0$.
Complete listings (each plan as an $(n-3)$-tuple of forms: $3$-tuples at
$n=6$, $9$-tuples at $n=12$) are in the
ancillary files; two independently written programs agree.

\section{Serial ancestry, and cycles without repeated pitch classes}
\label{app:serial}

Everything in this appendix is established computationally rather than
formally verified, and none of it is used in the proof of
Theorem~\ref{thm:classification}.

\subsection{A serial family tree}

The objects of this paper descend recognizably from twelve-tone
serialism, and it is worth placing them in that lineage.  A Schoenbergian
row states each pitch class exactly once and imposes no condition on its
windows.  The classic refinements go two ways.  Berg's \emph{Lyric Suite}
row is \emph{all-interval}: its eleven consecutive dyads exhaust the
eleven ordered intervals---an exhaustiveness condition on windows of
width~$2$, on top of pitch-class injectivity; the enumeration of such
rows was an early landmark of computational music theory
\cite{BMF1965}, corrected by Cohen \cite{Cohen1972} and completed by
Morris and Starr \cite{MorrisStarr1974}, whose census of $3856$ rows in
normal form ($1928$ up to inversion) remains standard; the corpus has
since been revisited with network methods \cite{Nardelli2020}.  Webern's Concerto op.~24 derives its row from a
\emph{single} trichord type stated four times---exhaustive repetition of
one window class. Carter's all-trichord hexachord $\{012478\}$ contains a
representative of every trichord class \cite{Schiff1998,Whittall2008}.

The trichord cycles of this paper occupy a natural cell of this table:
exhaustiveness of \emph{width-$3$ windows} over the semitone-bearing
types, with pitch classes left \emph{free} to repeat.  The final
serial discipline is then restored by force:

\begin{definition}
An \emph{all-trichord near-row} is a trichord cycle
(Definition~\ref{def:cycle}) whose $n-3$ pitch classes are distinct---%
thus an arrangement of all but three of the $n$ pitch classes, whose
windows exhaust the semitone trichord types.
\end{definition}

Near-rows are the intersection of two requirements of quite different
combinatorial character, and the tight structure of the main text does
not survive the intersection.  The next subsection says precisely why.

\subsection{All-trichord rings, and the missing scalar}

There is one existing object that sits closer still.  By a numerical
coincidence special to
$n=12$, the number of trichord classes up to transposition \emph{and}
inversion is exactly twelve---the length of a row---so a twelve-tone row
read cyclically has exactly enough windows to exhaust them.  Marsden \cite{Marsden2012}, reporting in passing the first musical
computation he undertook, describes exactly these objects---twelve-note
series which, read circularly, realize all twelve $T/I$ trichord classes
among their twelve consecutive triples---states that there are only
four, and lists them.  We call them \emph{all-trichord rings}.  (The linear variant, after
Babbitt and Morris, drops the wrap and necessarily omits two classes,
conventionally $036$ and $048$ \cite{Morris1987}.)  His count is confirmed here independently, and so is his list: an
exhaustive search finds $192$ labelled rings falling into four orbits
under transposition, inversion, retrograde and rotation, and the four
series printed in \cite{Marsden2012} are precisely orbit
representatives of those four.  But the
all-trichord property enjoys one symmetry that the present family lacks: the
cycle-of-fourths transform $M_5\colon x\mapsto 5x$ permutes the twelve
$T/I$ trichord classes and therefore maps rings to rings, and it pairs
Marsden's four ($1\leftrightarrow2$, $3\leftrightarrow4$).  Under the
full group of twelve-tone operations including $M$ there are thus
\emph{essentially only two} all-trichord rings.  (No such collapse is
available in the main text: $M_5$ destroys the semitone-bearing property,
so the two families differ even in their symmetry groups.)

Our family and Marsden's differ in every convention---transposition-only
versus $T/I$ classes, semitone-bearing types versus all types, free
versus distinct pitch classes, all $n$ versus $n=12$ alone---which makes
the following comparison possible:

\begin{theorem}\label{thm:missing-scalar}
In every all-trichord ring, the chromatic trichord is presented in
zigzag; no ring presents it as a chromatic run.  \emph{(Finite
verification: all $192$ labelled rings, and in particular each of the
four series printed in \cite{Marsden2012}.)}
\end{theorem}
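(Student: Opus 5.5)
The statement is flagged as a finite verification, so the plan is to give a reproducible computation, organized so that it can be checked by hand on the four printed series and by machine on all $192$ labelled rings. I would also extract whatever conceptual content the computation reveals.

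\emph{Step 1: fix the objects.} An all-trichord ring is a cyclic arrangement $(c_0,\dots,c_{11})$ of the twelve pitch classes of $\Z_{12}$, each used once. Its twelve windows $\{c_i,c_{i+1},c_{i+2}\}$ must realize the twelve $T/I$ trichord classes once each. In particular the class $012$ occurs in exactly one window, and its contour is \emph{scalar} or \emph{zigzag} exactly as in Section~\ref{sec:scalar-zigzag}. Since the symmetry group (transposition, inversion, retrograde, rotation) preserves the scalar/zigzag dichotomy, it suffices to check one representative per orbit. So the first step is to confirm, by exhaustive search over the $11!$ labelled rows with $c_0=0$ (pruned window by window, rejecting any repeated $T/I$ class), that there are exactly $192$ labelled rings forming four orbits. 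Then I would read off the chromatic window in one representative of each orbit, for instance each of Marsden's four printed series.

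\emph{Step 2: a structural shortcut.} To make the verification less opaque, I would first prove the local fact that drives it. A scalar chromatic window $x,x{+}1,x{+}2$ contains two consecutive semitone steps. Its neighbouring windows $(x{-}?,x,x{+}1)$ and $(x{+}1,x{+}2,?)$ then contain a semitone step adjacent to a step of some size. Injectivity of pitch classes forbids reuse of $x,x{+}1,x{+}2$ and also of $x{-}1$ or $x{+}3$ in certain positions, because those would create a second $012$ window. Hence the two neighbours are forced to lie in the short list of semitone-bearing classes $013,014,\dots,016$. Using the $M_5$ symmetry noted above, which pairs the orbits, it is enough to examine two orbits rather than four.

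\emph{Main obstacle.} I expect no clean conservation-law argument in the style of Theorem~\ref{thm:mod3} to be available; that is precisely Open Problem~\ref{prob:marsden}. The honest core is therefore the exhaustive search in Step~1. The work there lies in making it trustworthy: two independent implementations with different class encodings (prime forms versus interval-vector-plus-inversion tags), cross-checking the $192$ count against Marsden's stated four orbits, and reporting the chromatic contour for every labelled ring, not only for the orbit representatives.
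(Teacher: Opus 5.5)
Your Step~1 is the paper's argument. The paper establishes the statement by exhaustive enumeration of the $192$ labelled rings in four orbits, reading off the chromatic contour. Reducing to one representative per orbit is legitimate, because transposition, inversion, retrograde and rotation all preserve the scalar/zigzag dichotomy. That step alone proves the theorem.

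The shortcut in Step~2 is wrong, however, and should be dropped.

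\emph{The $M_5$ reduction.} $M_5$ does pair the four orbits, but it does not transport the chromatic contour, because it does not fix the chromatic class. We have $M_5\{0,1,2\}=\{0,5,10\}$, which has class $027$. On $T/I$ classes, $M_5$ swaps $012\leftrightarrow027$, $013\leftrightarrow025$ and $014\leftrightarrow037$, and fixes the other six. Hence the chromatic window of $M_5(R)$ is the image of the $027$-window of $R$. It is scalar exactly when $R$ states $027$ as a direct stack of fourths or fifths, with steps $(5,5)$ or $(7,7)$, since these map to $(1,1)$ and $(-1,-1)$. The contour of $R$'s own chromatic window says nothing about this. So checking the chromatic window in two orbits proves the statement for those two orbits only. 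To halve the work legitimately, you must also verify in each representative that its $027$-window is not a pure stack.

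\emph{The local analysis.} The analysis preceding the $M_5$ step is empty. That the two neighbours of a scalar chromatic window lie among $013,\dots,016$ is automatic: each contains the shared semitone step, and neither can repeat $012$. It yields no contradiction and shortens nothing.

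Your closing paragraph falls back on the full search over all labelled rings, so the proof survives. It survives only because Step~1 does not depend on Step~2, and the two-orbit claim is false as stated.
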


\begin{musical}
Twelve-tone equal temperament thus houses two exhaustive trichordal
cycles that are exact chromatic opposites.  In the cycles of this
paper, the chromatic trichord \emph{must} be a run and cannot be broken;
in Marsden's rings it \emph{must} be broken and cannot run.  (Compare
also $9$TET, the one temperament in the present family where the scalar, not
the zigzag, goes missing.)  Theorem~\ref{thm:missing-scalar} rests on exhaustive enumeration; a
conservation law governing Marsden's family in the manner of
Theorem~\ref{thm:mod3} is not known, and finding one is
Problem~\ref{prob:marsden}.
\end{musical}

\subsection{Injectivity is self-avoidance, and lives in the fiber}

Every exact result in this paper runs through the Reduction
(Theorem~\ref{thm:reduction}), whose whole power comes from quotienting
by transposition: forms, choice functions, balance, the conservation law,
the transfer automaton---all are statements about melodic \emph{steps},
blind to absolute pitch.  Injectivity is precisely the information this
quotient destroys: a cycle has distinct pitch classes if and only if the
\emph{integrated} walk $c_i=\sum_{j<i}d_j$ is injective---a
self-avoidance condition on the partial sums of the Eulerian circuit.
Distinctness therefore lives entirely in the fiber of the abstraction
map of Section~\ref{sec:crystal}, invisible to the harmonic plan; and
self-avoidance constraints are the classic generators of non-exact
combinatorics.  This is the structural reason to expect censuses but not
closed forms below---while noting the compensation that everything
remains finitely and efficiently computable.

\subsection{The independence heuristic}

Are the two requirements asymptotically independent?  A naive model
treats the $m-1=n-4$ free pitch classes of a labelled cycle as uniform,
giving
\[
p(n)\;=\;\Pr[\text{all distinct}]
\;=\;\frac{(n-1)!}{6\,n^{\,n-4}}
\;\sim\;\frac{\sqrt{2\pi n}\;n^{3}}{6}\,e^{-n},
\]
by Stirling's formula applied to $(n-1)!$.  Only the factor $e^{-n}$
matters below; the polynomial prefactor plays no part in the
exponential-scale comparison.
The censuses of Section~\ref{sec:census} are consistent with
exponential growth $A(n)+B(n)\asymp\lambda^{n}$; the observed per-step
ratios oscillate with the parity of $n$ and settle near $4$ for
$n\ge 14$, so that $\lambda>e$ (no asymptotic is proved here).  The
expected number of near-rows then grows like $(\lambda/e)^{n}$: the
heuristic predicts not mere nonemptiness but exponential abundance.  The data agree, and quantify the
correction: the ratio of the observed distinct fraction to $p(n)$ grows
only sub-exponentially (Table~\ref{tab:iid}), consistent with
independence at the exponential scale; the excess over $1$ is explained
qualitatively by the window structure itself, which already forbids
repeats at distances $1$ and $2$ and thus removes the dominant collision
terms of the naive model.

\begin{table}[ht]
\centering
\begin{tabular}{rccc}
\toprule
$n$ & observed fraction & iid $p(n)$ & ratio\\
\midrule
12 & 0.2347 & $1.55\times10^{-2}$ & 15.2\\
13 & 0.1250 & $7.53\times10^{-3}$ & 16.6\\
14 & 0.0886 & $3.59\times10^{-3}$ & 24.7\\
15 & 0.0263 & $1.68\times10^{-3}$ & 15.7\\
16 & 0.0183 & $7.74\times10^{-4}$ & 23.7\\
17 & 0.0175 & $3.52\times10^{-4}$ & 49.8\\
18 & 0.0054 & $1.58\times10^{-4}$ & 34.4\\
21 & 0.0010 & $1.35\times10^{-5}$ & 76.9\\
\bottomrule
\end{tabular}
\caption{Fraction of labelled cycles with distinct pitch classes, against
the independence model.  The correction ratio grows sub-exponentially:
the data are consistent with independence at the exponential scale.}
\label{tab:iid}
\end{table}

The delicate case is the zigzag sector, whose growth constant per unit
$n$ is $\approx 24^{1/6}\cdot(\text{fiber growth})\approx 2.7$---%
numerically indistinguishable from $e$.  Zigzag near-rows thus sit on the
heuristic's \emph{critical line}: their expected number is
$(\lambda_B/e)^n\times(\text{polynomial})$ with $\lambda_B/e\approx 1$,
so existence is decided by the corrections, not the exponent.  The data
(Table~\ref{tab:nearrows}) show the polynomial factors winning---so far,
with one sporadic failure.

\subsection{Census of near-rows}

\begin{table}[ht]
\centering
\begin{tabular}{rrrrr}
\toprule
$n$ & scalar $A_d(n)$ & zigzag $B_d(n)$ & orbits (sc./zz.) & fraction of all cycles\\
\midrule
8  & 30 & 0 & 2 / 0 & 1.000\\
9  & 0 & 24 & 0 / 1 & 1.000\\
10 & 84 & 0 & 4 / 0 & 1.000\\
11 & 0 & 0 & --- & ---\\
12 & 414 & 0 & 13 / 0 & 0.235\\
13 & 40 & 0 & 1 / 0 & 0.125\\
14 & 2178 & 0 & 50 / 0 & 0.089\\
15 & 240 & 48 & 5 / 1 & 0.026\\
16 & 8086 & 0 & 159 / 0 & 0.018\\
17 & 2800 & 0 & 50 / 0 & 0.018\\
18 & 43230 & 0 & 723 / 0 & 0.005\\
19 & 11264 & 0 & 176 / 0 & ---\\
20 & 242522 & 0 & 3575 / 0 & ---\\
21 & 67464 & 1584 & 937 / 22 & 0.001\\
24 & --- & 2100 & --- & ---\\
\bottomrule
\end{tabular}
\caption{All-trichord near-rows: labelled counts by chromatic contour.
For $n\le 10$ every cycle is automatically a near-row.  The value
$B_d(24)=2100$ is $4m$ times the pinned count $25$, the four zigzag
contours being equinumerous by the retrograde--inversion symmetry (an
identity checked exactly against direct enumeration at $n=15,21$).}
\label{tab:nearrows}
\end{table}

Three remarks.  First, near-rows exist at every computed $n\ge 12$ and
their count grows briskly ($\approx 2.3$ per unit $n$), in line with the
heuristic.  Second, \emph{zigzag} near-rows exist at $n=9,15,21,24$ but
fail at $n=18$---even though $18$TET has $1800$ zigzag cycles, every one
repeats a pitch class.  After closure and weak connectivity are imposed,
only twenty zigzag harmonic plans remain at $n=18$---far fewer than the
$864$ merely balanced ones counted in Section~\ref{sec:dynamics}---and
every Eulerian realization of every one of them repeats a pitch class.
Twenty misses is within statistical reason on the critical line; whether the failure at $18$ is sporadic (as $\{6,12\}$
proved to be one level up) or structural is open.  Third, the unique
zigzag near-row orbit of $15$TET,
\[
(0,\,2,\,1,\,7,\,8,\,12,\,9,\,10,\,5,\,11,\,4,\,3),
\]
twelve distinct pitch classes of fifteen with a broken chromatic, may be
the most concentrated compositional object this study has produced.
\begin{figure}[ht]
\centering
\includegraphics[width=0.98\textwidth]{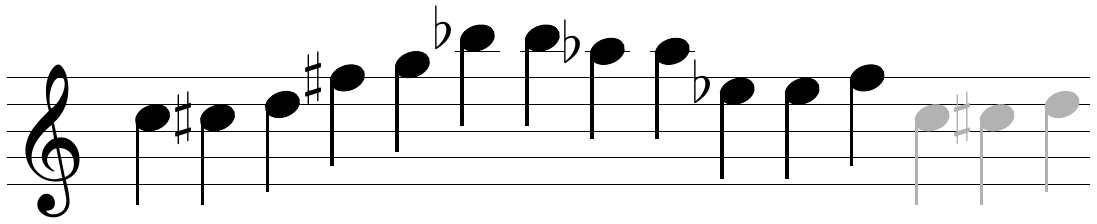}
\caption{A twelve-tone row arising from a drift-$3$ trichord spiral.  Its
ten consecutive trichords realize all nine semitone-bearing types, the
chromatic type twice.  The three grey notes begin the next row of the
chain, the $T_3$-transposition of this one, elided with it by a shared
trichord.}
\label{fig:row}
\end{figure}

\subsection{The omitted trichord}

A near-row in $12$TET omits three pitch classes.  The omitted set is an
arbitrary trichord---it is the \emph{complement} of the pitches used,
not a window, and so ranges over all trichord types, not merely the
semitone-bearing types that the windows must realize.  Which trichords
can be omitted?  The answer, over all $414$ near-rows, is sharply structured
(classes up to transposition; inversional pairs necessarily
equinumerous):

\begin{center}
\begin{tabular}{lr@{\qquad}lr}
\toprule
omitted type & count & omitted type & count\\
\midrule
$036$ (diminished) & 144 & $037/038$ (minor/major) & $18+18$\\
$012$ (chromatic) & 72 & $013/01\mathrm{T}$ & $18+18$\\
$016/017$ (Viennese) & $36+36$ & $014/019$ & $18+18$\\
$024$ (whole-tone) & 18 & &\\
\bottomrule
\end{tabular}
\end{center}

\noindent Eight transposition classes never occur as the omitted set:
the augmented triad $048$, and the types $015/018$, $025/029$, $026/028$,
$027$.  That the \emph{diminished triad} is by far the most dispensable
trichord, that the chromatic can be omitted from a structure whose whole
purpose is chromatic exhaustion, and that the \emph{augmented triad can
never be spared}, are facts we can verify but not yet explain; the last
seems the most tractable (the complement of an augmented triad is a union
of three chromatic trichords, a strong constraint on the available
steps).  The remaining entries of the table are unexplained.  The two set classes singled out here---diminished as the most
dispensable, augmented as indispensable---are precisely the two that the
Babbitt--Morris all-trichord row is defined by excluding, namely
SC 3-10 $[036]$ and SC 3-12 $[048]$ \cite{MorrisETT}.

\begin{musical}
For the composer: a near-row is a genuinely serial object---an ordering
of nine of the twelve pitch classes---that carries, in addition, the full
trichordal exhaustion of this paper, the unique chromatic cadence of
Lemma~\ref{lem:adjacent}, and a choice of which trichord to leave silent.
The table says the silence is most naturally diminished; that it can,
with more effort, be the chromatic itself (a cycle exhausting all
semitone trichord types while omitting three chromatically adjacent
pitches); and that it can never be augmented.
\end{musical}

\subsection{From drift back to the twelve-tone row}\label{app:rowsegue}

Drift (Section~\ref{sec:drift}) opens a passage from our cycles back
into the conventional twelve-tone world.  The first twelve notes of a
drift-$s$ spiral are its nine-note period followed by the
$s$-transposition of its opening window; if the period's pitches are
distinct, those twelve notes form an aggregate---a twelve-tone
row---exactly when
\[
\text{the omitted trichord}\;=\;T_s(\text{some window of the cycle}),
\]
welding the drift construction to the omitted-trichord analysis above.
When the condition holds, the spiral does not stop at one row: by
periodicity it is an unending \emph{chain of twelve-tone rows}, each the
$T_s$-transposition of the last, elided with its neighbour by a shared
trichord---row-chaining in the classical serial manner, closing after
$12/\gcd(s,12)$ rows.  Each such row has a property with no name we know
of: its ten consecutive trichords realize all nine semitone-bearing
trichord types, with exactly one type---the elision trichord---stated
twice.

Exhaustive search over all distinct-pitch drift spirals in $12$TET
($2358$ across the eleven nonzero drifts, of which $100$ are zigzag)
finds $152$ spiral--rotation pairs satisfying the aggregate condition,
yielding $34$ such rows up to transposition.  They arise at drifts $2$
through $10$ but never at drift $\pm 1$; the richest drifts are $\pm3$.
A specimen at drift $3$:
\[
\textrm{C}\ \textrm{C}\sharp\ \textrm{D}\ \textrm{F}\sharp\ \textrm{G}\
\textrm{B}\flat\ \textrm{B}\ \textrm{A}\flat\ \textrm{A}\ \textrm{E}\flat\
\textrm{E}\ \textrm{F},
\]
whose chain descends (or ascends) by minor thirds, closing after four
rows.  And one more finite fact, verified over the complete enumeration:

\begin{theorem}\label{thm:rowgate}
Every twelve-tone row arising as an aggregate of a trichord spiral in
$12$TET is scalar: none of the $100$ zigzag spirals extends to a row.
\end{theorem}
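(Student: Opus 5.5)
The statement is a finite fact about $12$TET, and I would prove it as the other finite verifications in the paper are proved: by an exhaustive enumeration organized through the Reduction (Theorem~\ref{thm:reduction}) in its drift form (Section~\ref{sec:drift}). The aim is that the search space is generated from the small, certified list of zigzag harmonic plans rather than from raw pitch words.

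\emph{Step 1: the zigzag plans.} By Proposition~\ref{prop:certificates} and Appendix~\ref{app:cert} there are exactly $48$ balanced zigzag choice functions at $n=12$. Four are disconnected and so have no Eulerian circuit. The other $44$ fall into four plans at each nonzero drift $s$, by the drift census. \emph{Step 2: the spirals.} For each of the $44$ connected plans I would list all Eulerian circuits with a distinguished starting edge. Each one integrates from $c_0=0$ to a nine-note period $c_0,\dots,c_8$ with $c_9=c_0+s$. This recovers the $504$ labelled zigzag interval cycles. I then keep only those whose period has nine distinct pitch classes; the paper records $100$ of these. \emph{Step 3: the aggregate test.} Let $O$ be the three omitted pitch classes of a surviving spiral. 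For each rotation of the spiral I would test the criterion of Section~\ref{app:rowsegue}: the twelve notes form a row exactly when $O=T_s(W)$, where $W=\{c_0,c_1,c_2\}$ is the opening window of that rotation. It is enough to check whether $O-s$ equals any of the nine windows, which is $9$ set comparisons per spiral. The claim is that all $100\cdot 9$ tests fail. As a cross-check, rerunning the same pipeline on the scalar plans should reproduce the stated $152$ spiral--rotation pairs and $34$ rows. This confirms the test is implemented correctly and not vacuously failing.

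I would also try to shrink or explain the computation conceptually before relying on it. First, retrograde--inversion together with $s\mapsto -s$ acts on the zigzag spirals, so only drifts $1\le s\le 6$ need checking. Second, there is a structural lever. The omitted set $O$ must be a translate of a window, so it is itself a semitone-bearing trichord $T_k$. Moreover $O-s$ must be one of the nine windows of the cycle. Combined with Lemma~\ref{lem:adjacent}, a zigzag spiral has no two consecutive semitone steps. I would try to show that the complement of a zigzag period can never contain a translate of a window. The first attempt would be a pitch-class counting argument on the semitone pairs, using the identity \eqref{eq:2SK} applied over a full period.

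The main obstacle is that last conceptual step. Distinctness and the aggregate condition both live in the fiber, which is the integrated walk, and the paper stresses that self-avoidance of this kind is invisible to the harmonic plan. I therefore expect no clean invariant to be available, and the proof to rest honestly on the exhaustive check of Steps 1--3. That check is small, at most a few hundred spirals. It would be performed by two independent programs, in keeping with the paper's verification standard.
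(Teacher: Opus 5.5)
Your plan is, in substance, the paper's proof. Theorem~\ref{thm:rowgate} is a finite fact, established by exhaustive search over the distinct-pitch drift spirals of $12$TET. Generating the zigzag spirals from the $44$ connected zigzag plans via the drift form of Theorem~\ref{thm:reduction} is a sound way to organize that search. Steps~1 and~2 are fine, including your reading of the $100$ as labelled spirals with a distinguished starting edge: up to rotation there are only $504/9=56$ zigzag circuits, so $100$ cannot count spirals up to rotation.

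The shortcut in Step~3 is wrong, however. Reducing to nine comparisons against a single $O$ presumes that the omitted set is the same for every rotation. That holds for closed cycles but fails for $s\neq 0$. The spiral restarted at position $r$ has period $\{c_r,\dots,c_8,\,c_0+s,\dots,c_{r-1}+s\}$, whose complement is in general not $O$. The data show this: $100$ is not a multiple of $9$, whereas the $414$ closed near-rows are.

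Since Step~2 already lists every starting edge as a separate labelled spiral, the complete test is the single comparison $O-s=W_0$ for each of the $100$, equivalently whether $c_0,\dots,c_{11}$ are pairwise distinct. Your nine-window test contains this comparison, so if all $900$ comparisons fail, the theorem does follow. But your prediction that all $900$ fail is strictly stronger than the theorem. A match at some $r\neq 0$ would not signal a row, and by your stated ``exactly when'' you would misread it as a counterexample. For the same reason, your scalar cross-check need not reproduce the paper's $152$ pairs and $34$ rows, because your test counts a different set of pairs.

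One smaller point: retrograde--inversion preserves drift. It is inversion $x\mapsto -x$ alone, not the composite, that exchanges $s$ and $-s$ while preserving zigzag contour and the aggregate property of the first twelve notes. That is what licenses restricting to $1\le s\le 6$.
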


\begin{musical}
This is the missing zigzag's third refusal, and the most telling.  The
broken chromatic is not absent from twelve-tone equal temperament's
interval grammar---Section~\ref{sec:drift} produced it freely---but it
holds itself apart from the pitch world at every gate: it cannot close
into a nine-note cycle (drift zero is unattainable), and now we find
its spirals cannot align with the total chromatic either.  In $12$TET
the zigzag chromatic and the aggregate are, in every sense this paper
can measure, incompatible.  A composer who wants both has the
classification's advice: change the temperament.
\end{musical}

\subsection{Twelve-tone tables}\label{app:tables12}

For the reader who wants material to hand, the two twelve-tone
enumerations of this appendix in full.  Pitch classes are written
$0,\dots,9,\mathrm{T},\mathrm{E}$.

Table~\ref{tab:nearrows12} lists the all-trichord near-rows of $12$TET:
the $414$ labelled examples fall into $13$ classes under transposition,
inversion, retrograde and rotation, and one representative of each is
given, normalized to begin $0\,1\,2$ where possible.  Each uses nine of
the twelve pitch classes, its nine windows realize the nine
semitone-bearing trichord types once each, and---as
Theorem~\ref{thm:classification} requires---the chromatic window is in
every case a direct run.  The third column names the trichord type formed by the three omitted
pitch classes.  The fourth marks the three near-rows invariant under
retrograde-inversion: their stabilizer in the symmetry group has order
two, so each accounts for $18$ of the $414$ labelled examples where the
other ten account for $36$, and $10\cdot 36 + 3\cdot 18 = 414$.  For a
composer this is the familiar consequence that an RI-symmetric row has
$24$ rather than $48$ distinct forms.

\begin{table}[ht]
\centering\small
\begin{tabular}{rlll}
\toprule
 & near-row & omitted & sym. \\
\midrule
1 & \texttt{0\,1\,2\,4\,5\,9\,T\,6\,7} & $037$ &  \\
2 & \texttt{0\,1\,2\,4\,5\,T\,E\,7\,8} & $036$ &  \\
3 & \texttt{0\,1\,2\,5\,4\,E\,3\,T\,9} & $012$ & RI \\
4 & \texttt{0\,1\,2\,5\,4\,E\,T\,7\,8} & $036$ &  \\
5 & \texttt{0\,1\,2\,5\,6\,E\,T\,8\,7} & $016$ &  \\
6 & \texttt{0\,1\,2\,6\,3\,4\,T\,E\,8} & $024$ & RI \\
7 & \texttt{0\,1\,2\,6\,7\,9\,T\,3\,4} & $036$ &  \\
8 & \texttt{0\,1\,2\,6\,7\,T\,9\,4\,3} & $036$ &  \\
9 & \texttt{0\,1\,2\,7\,6\,9\,8\,3\,4} & $017$ &  \\
10 & \texttt{0\,1\,2\,7\,8\,T\,E\,3\,4} & $014$ &  \\
11 & \texttt{0\,1\,2\,8\,9\,6\,T\,E\,3} & $013$ &  \\
12 & \texttt{0\,1\,2\,9\,8\,E\,T\,3\,4} & $012$ &  \\
13 & \texttt{0\,1\,2\,E\,T\,5\,9\,4\,3} & $012$ & RI \\
\bottomrule
\end{tabular}
\caption{The $13$ all-trichord near-rows of $12$TET up to transposition,
inversion, retrograde and rotation.}
\label{tab:nearrows12}
\end{table}

Table~\ref{tab:rows12} lists the twelve-tone rows obtained from
drift spirals (Section~\ref{app:rowsegue}): a nine-note distinct-pitch
drift-$s$ cycle followed by the $s$-transposition of its first window.
There are $34$ up to transposition, each normalized to begin on $0$; the
second column gives the drift $s$, so that the row chains with its own
$T_s$-transposition and closes after $12/\gcd(s,12)$ rows.  Every one of
the $34$ is scalar (Theorem~\ref{thm:rowgate}), and in each the ten
consecutive trichords realize all nine semitone-bearing types with
exactly one repeated.

\begin{table}[ht]
\centering\small
\begin{tabular}{rll@{\qquad}rll}
\toprule
 & row & $s$ &  & row & $s$ \\
\midrule
1 & \texttt{0\,1\,6\,7\,4\,5\,9\,T\,E\,2\,3\,8} & $2$ & 18 & \texttt{0\,E\,T\,1\,2\,7\,3\,8\,9\,6\,5\,4} & $6$ \\
2 & \texttt{0\,5\,6\,9\,T\,E\,3\,4\,1\,2\,7\,8} & $2$ & 19 & \texttt{0\,E\,T\,7\,8\,1\,9\,2\,3\,6\,5\,4} & $6$ \\
3 & \texttt{0\,1\,2\,6\,7\,T\,E\,8\,9\,3\,4\,5} & $3$ & 20 & \texttt{0\,1\,9\,T\,E\,6\,5\,3\,2\,7\,8\,4} & $7$ \\
4 & \texttt{0\,1\,2\,8\,9\,6\,7\,T\,E\,3\,4\,5} & $3$ & 21 & \texttt{0\,8\,9\,2\,1\,E\,T\,5\,6\,7\,3\,4} & $7$ \\
5 & \texttt{0\,5\,1\,6\,7\,9\,T\,E\,2\,3\,8\,4} & $3$ & 22 & \texttt{0\,E\,T\,1\,2\,9\,8\,3\,4\,7\,6\,5} & $7$ \\
6 & \texttt{0\,8\,1\,2\,5\,6\,7\,9\,T\,3\,E\,4} & $3$ & 23 & \texttt{0\,E\,T\,1\,9\,2\,3\,8\,4\,7\,6\,5} & $7$ \\
7 & \texttt{0\,E\,T\,5\,4\,8\,9\,6\,7\,3\,2\,1} & $3$ & 24 & \texttt{0\,E\,T\,1\,2\,5\,4\,9\,3\,8\,7\,6} & $8$ \\
8 & \texttt{0\,E\,T\,5\,9\,6\,7\,4\,8\,3\,2\,1} & $3$ & 25 & \texttt{0\,E\,T\,3\,9\,2\,1\,4\,5\,8\,7\,6} & $8$ \\
9 & \texttt{0\,E\,T\,6\,7\,4\,5\,9\,8\,3\,2\,1} & $3$ & 26 & \texttt{0\,1\,2\,6\,5\,8\,7\,3\,4\,9\,T\,E} & $9$ \\
10 & \texttt{0\,1\,2\,9\,3\,T\,E\,8\,7\,4\,5\,6} & $4$ & 27 & \texttt{0\,1\,2\,7\,3\,6\,5\,8\,4\,9\,T\,E} & $9$ \\
11 & \texttt{0\,1\,2\,E\,T\,7\,8\,3\,9\,4\,5\,6} & $4$ & 28 & \texttt{0\,1\,2\,7\,8\,4\,3\,6\,5\,9\,T\,E} & $9$ \\
12 & \texttt{0\,1\,2\,E\,3\,T\,9\,4\,8\,5\,6\,7} & $5$ & 29 & \texttt{0\,4\,E\,T\,7\,6\,5\,3\,2\,9\,1\,8} & $9$ \\
13 & \texttt{0\,1\,2\,E\,T\,3\,4\,9\,8\,5\,6\,7} & $5$ & 30 & \texttt{0\,7\,E\,6\,5\,3\,2\,1\,T\,9\,4\,8} & $9$ \\
14 & \texttt{0\,4\,3\,T\,E\,1\,2\,7\,6\,5\,9\,8} & $5$ & 31 & \texttt{0\,E\,T\,4\,3\,6\,5\,2\,1\,9\,8\,7} & $9$ \\
15 & \texttt{0\,E\,3\,2\,1\,6\,7\,9\,T\,5\,4\,8} & $5$ & 32 & \texttt{0\,E\,T\,6\,5\,2\,1\,4\,3\,9\,8\,7} & $9$ \\
16 & \texttt{0\,1\,2\,5\,4\,E\,3\,T\,9\,6\,7\,8} & $6$ & 33 & \texttt{0\,7\,6\,3\,2\,1\,9\,8\,E\,T\,5\,4} & $10$ \\
17 & \texttt{0\,1\,2\,E\,T\,5\,9\,4\,3\,6\,7\,8} & $6$ & 34 & \texttt{0\,E\,6\,5\,8\,7\,3\,2\,1\,T\,9\,4} & $10$ \\
\bottomrule
\end{tabular}
\caption{The $34$ twelve-tone rows arising from trichord spirals in
$12$TET, up to transposition, with the drift $s$ of the generating
spiral.}
\label{tab:rows12}
\end{table}

\section{Machine verification}\label{app:lean}

The principal results of Part~II have been formalized in Lean~4
(toolchain \texttt{v4.28.0}, Mathlib \texttt{8f9d9cf}) and machine
checked.  The development accompanies this paper; the following is a
summary of exactly what is and is not verified.

\emph{Formal setting.}  The primary formal object is the step word, not
the pitch cycle: a map $D:\Z_{n-3}\to\Z_n$ whose consecutive pairs
$(D_i,D_{i+1})$ realize each class $T_k$, $2\le k\le n-2$, exactly once
(\texttt{IsIntervalCycle}).  Pitch cycles are the differenced special
case (\texttt{IsCycle}), and drift is defined for the general object.
This is not a convenience: in this setting the conservation law is a
\emph{telescoping identity}, and the formal proof of
Theorem~\ref{thm:mod3} uses no graph theory, no Eulerian circuits, and
no multiset balance.  It is, accordingly, formalized in the strong form
of Section~\ref{sec:drift}: any interval cycle of \emph{any} drift.

\emph{Verified.}  With no axioms beyond Lean's three standard ones
(\texttt{propext}, \texttt{Classical.choice} and \texttt{Quot.sound}):
the six-form characterization of window realizations; the chromatic
cadence lemma (Lemma~\ref{lem:adjacent}) in full, corollary included---%
that two consecutive semitone steps agree in direction and span the
chromatic window presented scalarly, that a cycle has at most one such
adjacent pair, and that a zigzag cycle has none; the weight
computations and
the fee-sum identity; the fact that a step pair realizes at most one
class, and that every window of a cycle realizes one (a pigeonhole
over the $n-3$ classes and the $n-3$ positions); \textbf{the
conservation law} (Theorem~\ref{thm:mod3}), in both the interval-cycle
and the pitch-cycle form; the emptiness of $6$TET; the explicit zigzag witnesses at
$n=9,15,18$; and \textbf{the existence half of the classification}---%
zigzag cycles for \emph{every} $n\ge 8$ with $3\mid n$ and $n\ne 12$,
not merely for the computed instances.  Also verified by kernel
evaluation: that $7$TET and $11$TET admit no trichord cycle
(Table~\ref{tab:census}), and that $9$TET admits no scalar
cycle---indeed that no $9$TET cycle has any window whose two steps are
equal semitones, which is stronger.

Additionally verified, with the two compiler-reduction axioms
(\texttt{Lean.ofReduceBool}, \texttt{Lean.trustCompiler}) arising from
one reflected finite computation: \textbf{the twelve-tone
exclusion}, that no trichord cycle in $12$TET presents the chromatic in
zigzag.
The certificate is the $6^9$ enumeration of form-choice plans of
Proposition~\ref{prop:certificates}, reduced to a Boolean check proved
equivalent to the mathematical condition, and evaluated by compiled
reflection.  What it establishes is the conclusion drawn there for
$n=12$---that no balanced, closed plan has a zigzag chromatic---and not
the tallies $4$ and $48$, which rest on the two independent enumerations
reported in Section~\ref{sec:exceptional}.

Consequently \textbf{Theorem~\ref{thm:classification} is machine
verified in both directions}, under the standing hypothesis $n\ge 6$
and with the axiom profile just described.  So is
Proposition~\ref{prop:local}, whose proof above reduces it to the
twelve-tone exclusion.

\emph{How existence was formalized, and a remark on the proof.}  The
pumping induction of Section~\ref{sec:pumping} was \emph{not}
formalized as an induction.  Formalizing it directly would have
required the theorem that a balanced connected directed multigraph
admits an Eulerian circuit; a survey of Mathlib established that no
such result exists there in any form---its Eulerian material concerns
simple graphs, supplies only conditions necessary for a \emph{given}
Eulerian trail, and lists the converse as an open task---so that route
would have meant developing directed-multigraph walk theory and
Hierholzer's theorem as a prerequisite.

Instead the construction was made explicit.  In level coordinates the
entries of every form at level $j$ lie in $\{\pm1,\pm j,\pm(j-1)\}$,
independently of $n$; a step word is therefore a word in the level
indices, of the same shape for every temperament, and the effect of
pumping is a local splice.  Carrying this out yields closed formulas:
with $n=15+6t$,
\[
W_t \;=\; [\,2,\,-1,\,6\,]\ \Vert\ \beta(7)\,\Vert\,\beta(10)\,\Vert\cdots\Vert\,\beta(7{+}3(t{-}1))
\ \Vert\ [\,-(7{+}3t)\,]\ \Vert\ [\,-1,-3,1,-5,6,1,4,-3\,],
\]
where the inserted block is $\beta(L)=[-L,\,-1,\,L{+}2,\,1,\,-(L{+}1),\,L{+}2]$,
and a companion formula for $n=18+6t$.  Each block sums to $3$ while
the cap entry moves by $-3$, so the word closes at every $n$; the
chromatic zigzag is the fixed opening $[2,-1]$, which the splice never
touches.  What is verified in Lean is that these words are interval
cycles for all $t$: the pumping argument survives as the derivation of
the formulas, while the formal proof is a direct parametric
verification, free of graph theory.  Readers who prefer the
construction to the induction may take the displayed words as the
definition and Section~\ref{sec:pumping} as their motivation.

\emph{Availability.}  The development, together with the programs
reproducing the paper's enumerative claims, is deposited in a public
repository with a permanent identifier; continuous integration rebuilds
it from a clean clone, rejects any unproved placeholder or new axiom
declaration, and publishes the axiom audit as a build artifact.
\ifanon
The repository and its DOI are withheld here for anonymous review and
have been supplied to the editor.
\else
It is archived at
\[
\texttt{https://doi.org/10.5281/zenodo.22040738}
\]
and developed at
\texttt{https://github.com/DavidVFeldman/missing-zigzag-new} (release
\texttt{v1.1}); see \cite{Repo}.
\fi

\emph{Not verified.}  The censuses of Section~\ref{sec:census}, the
sector analysis of Section~\ref{sec:dynamics}, and the results of
Appendix~\ref{app:serial} (near-rows, all-trichord rings, the row
segue) are outside the formalization; they rest on the computations
described in the text.  Of the finite verifications in the main text,
the tallies of Proposition~\ref{prop:certificates} and the local
adjacency count of Proposition~\ref{prop:local} likewise rest on those
computations.

\end{document}